\documentclass[10pt]{amsart}
\usepackage{amsmath}
\usepackage{amsfonts}
\usepackage{amsthm}
\usepackage{amssymb}
\usepackage{url}
\usepackage{fouriernc}
\usepackage{graphicx}
\usepackage{hyperref}
\usepackage[usenames,dvipsnames,svgnames,table]{xcolor}
\definecolor{badgerred}{rgb}{0.715,0.004,0.004}
\definecolor{burntorange}{rgb}{0.801,0.332,0.0}
\usepackage[font=small,labelfont={bf,sf}, textfont={sf},margin=0em]{caption}
\hypersetup{colorlinks,
  filecolor=black,
  linkcolor=MidnightBlue,
  citecolor=NavyBlue,
  urlcolor=RoyalBlue,
  bookmarksopen=true}
\hypersetup{bookmarksopenlevel=1}

\theoremstyle{definition}
\newtheorem{definition}{Definition}[section]
\newtheorem{thm}{Theorem}[section]
\newtheorem*{theorem*}{Theorem}
\newtheorem*{acknowledgement*}{Acknowledgements}
\newtheorem*{definition*}{Definition}

\newtheorem{lem}[thm]{Lemma}

\numberwithin{equation}{section}

\newcommand{\isdef}{{\stackrel{\textrm{def}}=}}
\newcommand{\mat}{\begin{pmatrix}} \newcommand{\rix}{\end{pmatrix}}
\newcommand{\tmat}{\left(\begin{smallmatrix}} \newcommand{\trix}{\end{smallmatrix}\right)}
\newcommand{\deter}{\left|\begin{matrix}} \newcommand{\minant}{\end{matrix}\right|}

\newcommand{\N}{\mathbb{N}}
\newcommand{\R}{\mathbb{R}}
\newcommand{\vt}{\vartheta}

\newcommand{\ve}[1]{{{\boldsymbol e}_{#1}}}
\newcommand{\vE}[1]{{{\boldsymbol E}_{#1}}}

\newcommand{\cA}{{\mathcal A}}
\newcommand{\cC}{{\mathcal C}}
\newcommand{\cE}{{\mathcal E}}
\newcommand{\cL}{{\mathcal L}}
\newcommand{\cO}{{\mathcal O}}
\newcommand{\cR}{{\mathcal R}}

\definecolor{orange}{RGB}{250, 140, 0}

\definecolor{turq}{RGB}{0, 160, 160}

\renewcommand{\emptyset}{\varnothing}	
\renewcommand{\tilde}[1]{\widetilde{#1}}

\title[Nonconvex ancient solutions]{Nonconvex ancient solutions\\ to Curve Shortening Flow}

\author[Zhang]{Yongzhe Zhang}
\address{Yongzhe Zhang}
\email{yongzhe@caltech.edu}
\thanks{The first author was partially supported by the NSF grants DMS-2018220 and DMS-2018221.}

\author[Olson]{Connor Olson}
\address{Connor Olson}
\email{cjo5325@psu.edu}
\thanks{The second author is supported by the National Science Foundation Graduate Research Fellowship Program under Grant No.~DGE1255832.
Any opinions, findings, and conclusions or recommendations expressed in this material are those of the authors and do not necessarily reflect the views of the National Science Foundation.}

\author[Khan]{Ilyas Khan}
\address{Ilyas Khan \\ University of Oxford \\ Mathematical Institute \\Andrew Wiles Building\\ Woodstock Rd \\ Oxford, UK \\  OX2 6GG} 
\email{Ilyas.Khan@maths.ox.ac.uk}
\thanks{The third author was partially supported by the Simons Collaboration on Special Holonomy in Geometry, Analysis, and Physics (\#724071 Jason Lotay).}

\author[Angenent]{Sigurd Angenent}
\address{Sigurd Angenent, University of Wisconsin--Madison, Van Vleck Hall, Madison, WI53706, USA}
\email{angenent@wisc.edu}

\begin{document}

\begin{abstract}
We construct an ancient solution to planar curve shortening.  The solution is at all times compact and embedded.  For \(t\ll0\) it is approximated by the rotating Yin-Yang soliton, truncated at a finite angle \(\alpha(t) = -t\), and closed off by a small copy of the Grim Reaper translating soliton.
\end{abstract}
\maketitle

\section{Introduction}
In \cite{DaskalopoulosHamiltonSesum:CompactAncientCS} Daskalopoulos, Hamilton, and Sesum showed that any compact, convex, and embedded ancient solution to Curve Shortening in the plane is either a shrinking circle or the ancient paperclip solution.  Qian You et.al.\cite{QianYouThesis,QianYouAngenent2021} showed that there exist many other ancient solutions that are either embedded and not compact, or otherwise compact, convex, but not embedded.  Here we construct an ancient solution to Plane Curve Shortening that is embedded, compact at all times, but not convex. We note that an ancient solution satisfying these properties was also constructed independently in \cite{CharyyevAncientSpirals}.  We present a brief comparison of the two different approaches at the end of this introduction.

Our construction begins with the Yin--Yang soliton, i.e.~the rotating soliton that is invariant with respect to reflection in the origin---see Figure~\ref{fig-truncated-YY} (left).  This spiral shaped curve divides the plane into two congruent parts, and under Curve Shortening evolves by rotating with unit speed in the counterclockwise direction.
\begin{figure}[b]
\end{figure}
Each of the two branches of the Yin--Yang spiral is a graph in polar coordinates, given by
\[
r=\cR(\theta+\tfrac\pi2), \quad r=\cR(\theta-\tfrac\pi2)
\]
respectively.  The Yin-Yang spiral is asymptotic to a Fermat spiral \cite{enwiki:1025901425} which in polar coordinates is given by \(r=a\sqrt\theta\).  The Yin-Yang curve satisfies
\begin{equation}
\label{eq-YY-simple-asymptotics}
\cR(\theta) = \sqrt{2\theta} + \cO(\theta^{-3/2}), \qquad
\cR'(\theta) = \frac1{\sqrt{2\theta}} + \cO(\theta^{-5/2})\quad (\theta\to\infty).
\end{equation}
We review the properties and derive expansions for \(\cR\) in appendix~\ref{sec-YY-properties}.

The Yin--Yang soliton is itself an example of an embedded ancient (and in fact eternal) solution.  It is however not compact.  Our goal in this paper is to construct a compact embedded ancient solution which, for \(t\to-\infty\) converges to the rotating Yin--Yang solution.
\begin{figure}[h]
\includegraphics[width=0.45\textwidth]{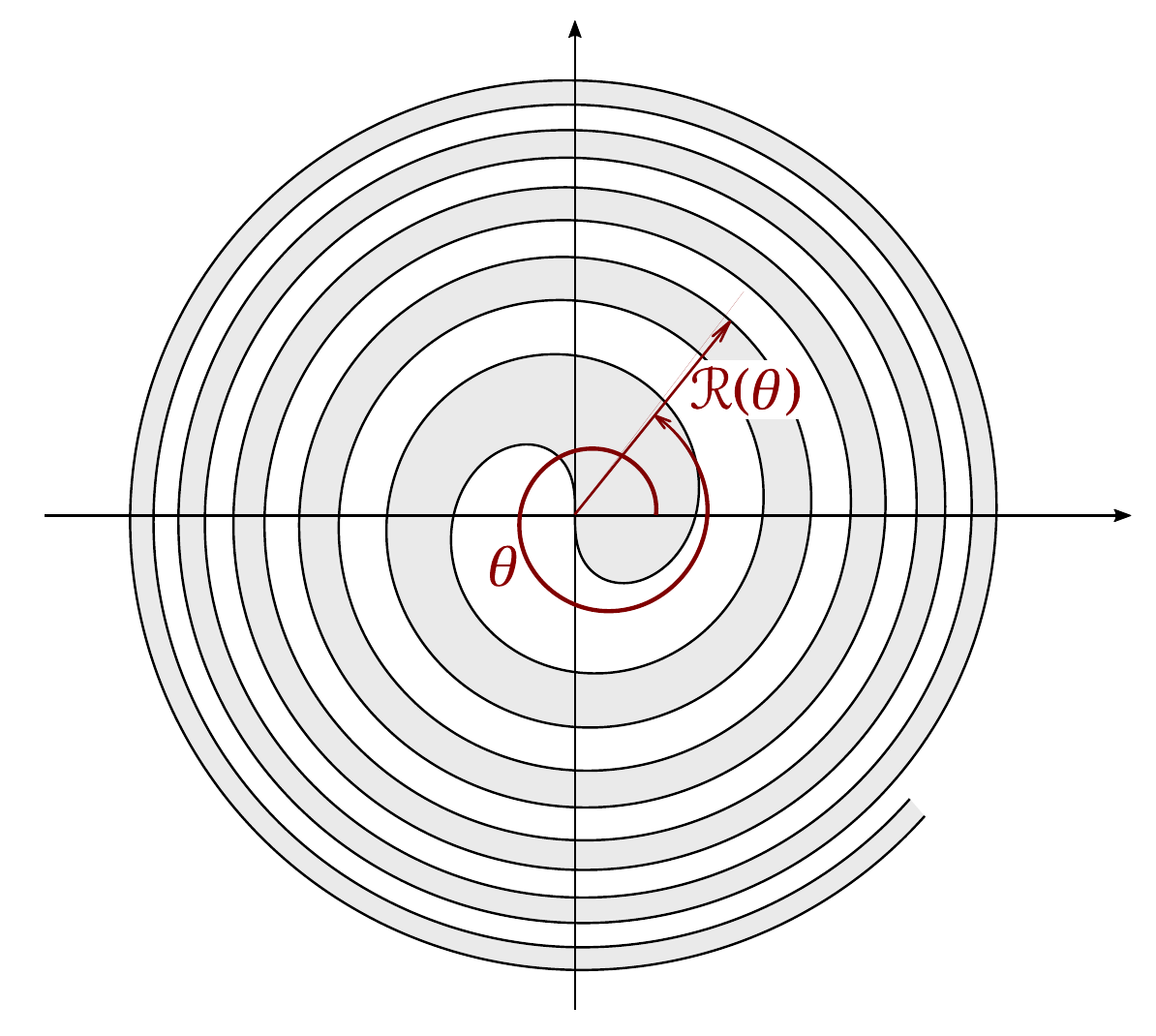}
\includegraphics[width=0.45\textwidth]{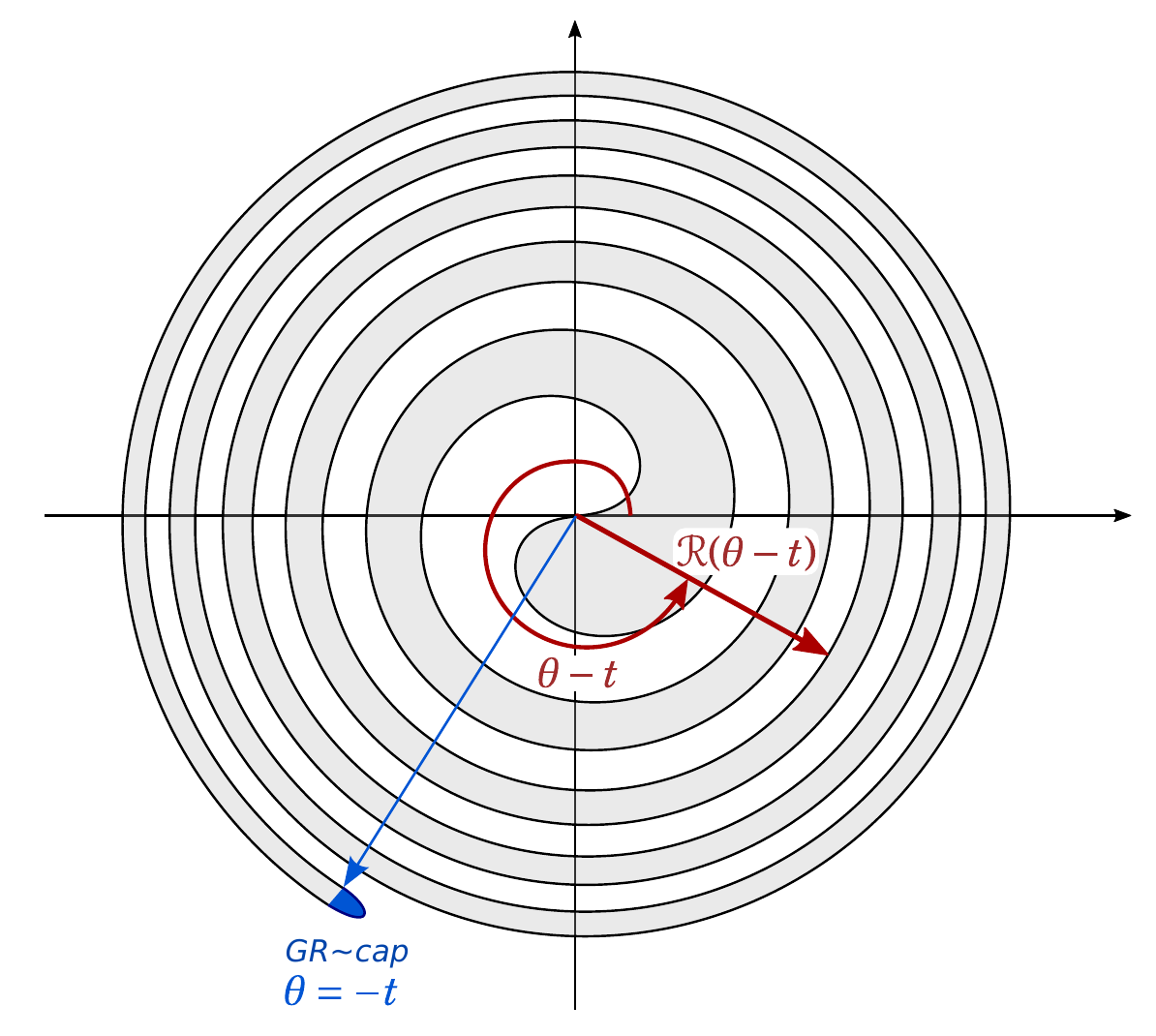}
\caption{ \textbf{Left:} The Yin Yang soliton in polar coordinates.  \textbf{Right:} The Truncated Yin Yang soliton at time \(t\ll 0\).  The bulk of the curve consists of an initial segment of the Yin--Yang curve rotated counterclockwise by an angle \(t\), which has two ends.  These ends are connected by a small cap in the shape of a translating soliton (the so-called ``Grim Reaper'') located at polar angle \(\theta=-t\).}
\label{fig-truncated-YY}
\end{figure}
In section~\ref{sec-cap-construction} of this paper we construct an approximate solution to CSF, by truncating the Yin--Yang solution at polar angle \(\alpha(t)\), and connecting the two ends with a small cap whose shape is approximately that of a rotated and rescaled copy of the Grim Reaper soliton.  See Figure~\ref{fig-truncated-YY} (right).

To determine how to choose the angle \(\alpha(t)\) we estimate the speed \(V\) of the cap at time \(t\).  Since the cap is approximately a Grim Reaper, its speed is related to its width \(w\) by \(V = \pi/w\).  The width \(w\) is approximated by
\[
w\approx \cR(\alpha(t)-t+\tfrac\pi2) - \cR(\alpha(t)-t-\tfrac\pi2) \approx \cR'(\alpha(t)-t)\,\pi,
\]
so that the cap moves with speed
\[
V\approx \frac\pi w \approx \frac 1{\cR'(\alpha(t)-t).}
\]
On the other hand, far away from the center, the arms of the Yin-Yang spiral are close to circular.  The cap, which moves with angular velocity \(-\alpha'(t)\) along a near circle with radius \(\cR(\alpha(t)-t)\), therefore has velocity \(V\approx -\alpha'(t)\cR(\alpha(t)-t)\).  It follows that \(\alpha'(t) \approx -1/(\cR\cR')\).  Since the asymptotics~\eqref{eq-YY-simple-asymptotics} imply \(\cR\cR'\approx 1\), we end up with
\begin{equation}
\label{eq-alpha-heuristic-choice}
\alpha'(t) = -1+o(1), \qquad \alpha(t) = -t+o(t).
\end{equation}

For any \(t<0\) we let \(\Omega(t)\subset\R^2\) be the region given in Polar Coordinates by
\begin{equation}
\label{eq-Omega-defined}
\Omega(t) = \Bigl\{(r\cos\theta, r\sin \theta) \;\big|\; t\leqslant\theta\leqslant -t,
\;\cR(\theta-t-\tfrac\pi2) \leqslant r\leqslant \cR(\theta-t+\tfrac\pi2)\Bigr\}
\end{equation}
We will call the boundary curve \(\partial\Omega(t)\) the \emph{truncated Yin-Yang curve}.  It consists of a segment of the rotating Yin-Yang curve and a straight line segment that connects the two ends at \(\theta=-t\).

\medskip\noindent \textbf{Main Theorem.  } There exists a compact ancient solution \(\{C(t) \mid t<0\}\) to Curve Shortening which for \(t\to-\infty\) is uniformly close to the truncated Yin-Yang curve in the sense that if \(\Lambda(t)\) is the bounded region enclosed by \(C(t)\) then\footnote{For two sets \(A\) and \(B\) we denote their symmetric difference by \(A\triangle B = (A\setminus B)\cup(B\setminus A)\).}
\[
\mathrm{Area}(\Lambda(t) \triangle \Omega(t)) \lesssim |t|^{-1+\delta} \qquad (t\to-\infty)
\]
for every \(\delta>0\).

\bigskip

The construction follows a pattern similar to the construction of ancient solutions in \cite{QianYouAngenent2021,QianYouThesis}, namely, construct a sequence of ``really old solutions'' \(\{\cC_n(t) \mid -n\leqslant t\leqslant t_0\}\) of Curve Shortening and extract a convergent subsequence whose limit is the desired ancient solution.  In section~\ref{sec-cap-construction} we first construct an ancient approximate solution \(\{ \cC_*(t) \mid -\infty < t< 0\}\) of Curve Shortening, i.e.~a family of curves for which
\begin{equation}
\label{eq-error-def}
\cE \isdef
\int_{-\infty}^{t_0}\int_{\cC_*(t)}  \left| V -\kappa\right|\, ds\, dt <\infty .
\end{equation}
Then we consider a sequence of solutions \(\{\cC_n(t) \mid -n\leqslant t <t_0\}\) of Curve Shortening whose initial curves \(\cC_n(-n)\) are chosen increasingly close to the approximate solution \(\cC_*(-n)\) at time \(-n\), in the sense that the area between \(\cC_*(-n)\) and \(\cC_n(-n)\) tends to zero as \(n\to\infty\).  Arguing as in \cite{QianYouAngenent2021,QianYouThesis} we observe that the area between the solutions \(\cC_n(t)\) and the approximate solution \(\cC_*(t)\) is bounded in terms of the ``error'' \(\cE\) and the area between the initial curves \(\cC_*(-n)\) and \(\cC_n(-n)\).  Since the curves we deal with in this paper are not graphical the area estimate is a bit more complicated than in \cite{QianYouAngenent2021,QianYouThesis}.  In section~\ref{sec-area-growth-bound} we present a more general estimate that generalizes the Altschuler-Grayson \cite{MR1158337} area bounds for Space Curve Shortening.  Finally, in section~\ref{sec-convergence} we show how this bound allows us to extract a convergent subsequence of the very old solutions \(\cC_n(t)\), and provides enough control to conclude that the resulting limit satisfies the description in the Main Theorem.

In appendix~\ref{sec-YY-properties} we recall the derivation of the Yin-Yang soliton, and obtain its asymptotic expansion at infinity.

\textbf{Comparison with the construction in \cite{CharyyevAncientSpirals}. }
The idea to construct ancient solutions as a limit of a sequence of very old solutions is natural.  In \cite{CharyyevAncientSpirals} the same strategy is also followed, but the core of the proof where one controls the sequence of old solutions is quite different. While the approach in our paper follows the method of area comparison established in \cite{QianYouAngenent2021}, the arguments in \cite{CharyyevAncientSpirals} proceed by carefully controlling a sequence of old solutions via an exponential barrier for the bulk of the solution.  Thereafter stability of the tip is established in \cite{CharyyevAncientSpirals} using a blow-up argument, combined with the uniqueness of the Grim Reaper as possible limit.

\section{Construction of the cap}\label{sec-cap-construction}
\subsection{Parametrized curves and the Curve Shortening deficit}
An evolving family of curves is a map \(X:(t_0, t_1) \times \R\to\R^2\) for which \(X_p(t, p)\neq 0\) for all \((t,p)\).  For such a family we define
\[
ds = \|X_p(t, p)\|\, dp,\qquad \frac{d}{ds} = \frac{1}{\|X_p(t, p)\|} \frac{d}{dp}\;.
\]
The normal velocity and curvature of the family \(X\) are
\[
V = \left\langle X_t, JX_s\right\rangle, \qquad \kappa = \left\langle X_{ss}, JX_s\right\rangle =\left\langle \frac{X_{pp}}{\|X_p\|^2}, \frac{JX_p}{\|X_p\|}\right\rangle
\]
where \( J= \tmat 0&-1 \\ 1 &0\trix\) represents counterclockwise rotation by \(\frac\pi2\).  By definition, the parameterized family of curves \(X\) satisfies Curve Shortening if \(V=\kappa\).  If it does not, then we measure the ``discrepancy with curve shortening'' in terms of the form
\begin{equation}
\label{eq-discrepancy-def}
|V-\kappa|\, ds
= \left|\Bigl\langle X_t-\frac{X_{pp}}{\|X_p\|^2}, JX_p\Bigr\rangle\right| \, dp
\end{equation}
The error \(\cE\) is obtained by integrating this form over the curve and in time.

We write \(\ve1 = (1,0)\), \(\ve2=(0,1)\) for the standard basis for \(\R^2\).  In this basis counterclockwise rotation by \(\theta\) is given by
\[
e^{\theta J} = \mat \cos \theta & -\sin \theta \\ \sin \theta & \cos \theta \rix.
\]
We will use the rotated frame \(\{\vE1(\theta), \vE2(\theta)\} = \{e^{\theta J}\ve1 , e^{\theta J}\ve2\}\), which satisfy
\[
\vE1'(\theta) = \vE2(\theta), \qquad \vE2'(\theta) = -\vE1(\theta).
\]

\subsection{\(Z\) or \((u, v)\)-coordinates}
We expect the tip of the ancient solution to be located near the point \(R(t)\vE1(-t)\), and to have width \(\sim R^{-1}\), where
\[
R=R(t)\isdef \cR(-2t).
\]
Thus we introduce new, time dependent, coordinates \(Z = u\ve1+v\ve2\) related to the cartesian coordinates \(X=(x_1, x_2)\) via
\begin{equation}
\label{eq-local-Z-coords}
X =  e^{-tJ}\{R\ve1 + R^{-1}Z\}, \text{ i.e.  }
Z = R \bigl\{e^{tJ}X - R\ve1\bigr\}.
\end{equation}

\subsection{The inner and outer Yin-Yang arms}

The region on one side of the Yin-Yang curve is foliated by rotated copies of the curve.  At time \(t\in\R\) the leaves of this foliation are parametrized by
\begin{equation}
\label{eq-YY-leaves-parametrized}
Y(\theta, t, y) =   \cR(\theta-t+y) \vE1(\theta) 
\end{equation}
where \(y\in[-\pi/2, \pi/2]\) determines the leaf, and \(\theta\in(t-y, \infty)\) is the polar angle on the leaf.  The inner and outer arms of the region that contains our ancient solution correspond to \(y=\pm\pi/2\).  See Figure~\ref{fig-foliation}.

\begin{figure}[t]
\centering
\includegraphics[width=0.45\textwidth]{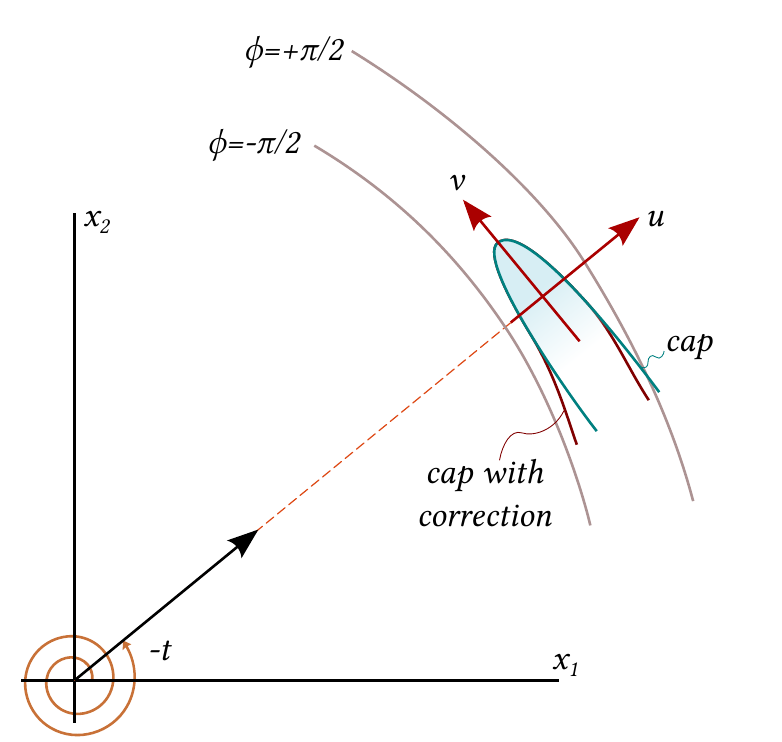}\hfill
\includegraphics[width=0.5\textwidth]{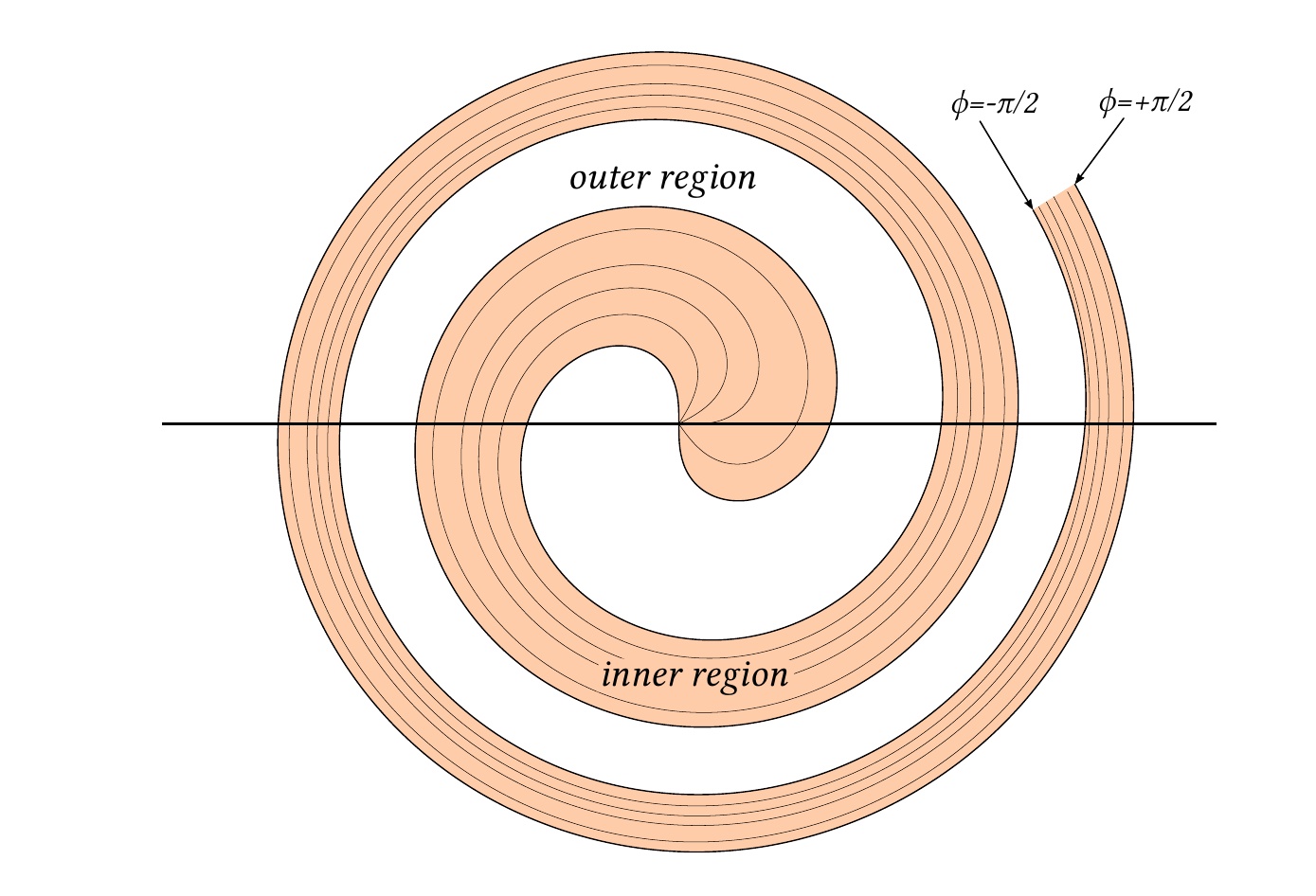}
\caption{\textbf{Left:} The \((u,v)\) coordinates.\qquad \textbf{Right:} The Yin Yang foliation at time \(t=0\).}
\label{fig-foliation}
\end{figure}

\subsection{Lemma}\label{subsec-yinyang-local-coords-lemma}
For any \(M>0\) there is a \(t_M<0\) such that if \(t\leqslant t_M\), then the segments of the Yin-Yang leaves \(Y(\theta, t, y)\) with \(|\theta+t|\leqslant M\tau^{-1} \ln\tau\) are graphs in \((u,v)\) coordinates of the form \(u=U_y(t, v)\), at least if \(t \leqslant t_M\).  Moreover, the functions \(U_y(t, v)\) satisfy
\begin{equation}
\label{eq-leaves-in-uv-expansion}
U_y(t, v) = y - \frac{y^2+v^2-2v}{2\tau} + \cO(\tau^{-2+\delta})
\end{equation}
for all $\delta >0$, where \(\tau=-4t\).
\begin{proof}
We begin with the defining equations
\begin{equation}\label{eq-YY-coords0}
\cR(\theta-t+y) \vE1(\theta) = e^{-tJ} \bigl\{ R \ve1 +R^{-1}Z\bigr\}
=e^{-tJ}\bigl\{(R+u/R)\ve1 + v/R\, \ve2\bigr\}.
\end{equation}
Multiply with \(e^{tJ}\) on both sides:
\[
\cR(\theta-t+y) \vE1(\theta+t) = R \ve1 +R^{-1}Z = (R+u/R)\ve1 + v/R\, \ve2.
\]
Here the left hand side is the polar form of the right hand side.  Under our assumptions \(R^2\sim \tau\) and \(|u|\ll \tau\), so \(1+u/R^2>0\), and hence we have
\begin{equation}\label{eq-YY-coords1}
\cR(\theta-t+y)^2 = R^2 + 2u + \frac{u^2+v^2}{R^{2}},\qquad
\theta+t = \arctan \frac{v/R^2}{1+u/R^2}.
\end{equation}
Since \(\alpha\mapsto \cR(\alpha)^2\) is a monotone function, the first equation in \eqref{eq-YY-coords1} can be solved for \(y\).  Using the asymptotic expansion in appendix~\ref{sec:cR-inverted} we get
\begin{equation}
2(\theta-t+y) = \cR^2 + 2\cR^{-2} +\cO(\cR^{-4}),
\end{equation}
where \(\cR=\cR(\theta-t+y)\).  Replace \(\cR^2\) by the expression in \eqref{eq-YY-coords1}, to get
\[
2(\theta-t+y) = R^2 + 2u + \frac{u^2+v^2+2}{R^2} +\cO\bigl(v^2R^{-4}\bigr).
\]
To eliminate \(\theta\) we expand the second equation in \eqref{eq-YY-coords1},
\[
\theta+t = \frac{v}{R^2} + \cO\bigl(v^2R^{-4}\bigr).
\]
Hence
\begin{align}\label{eq-YY-coords2}
  2y &= 2(\theta-t+y)-2(\theta+t)+4t & (-4t=\tau)\\
        &= R^2-\tau + 2u
          + \frac{u^2+v^2-2v+2}{R^2}+\cO\bigl(v^2R^{-4}\bigr).
          \notag
\end{align}
Expand \(R=R(t)=\cR(-2t)\) in powers of \(\tau=-4t\) using~\eqref{eq-R-etc-expanded}:
\[
R^2 = \cR(-2t)^2 = \tau \left(1-2\tau^{-2} + \cO(\tau^{-3})\right)
\]
and substitute this in \eqref{eq-YY-coords2} to get
\[
2y = 2u +\frac{u^2+v^2-2v}{\tau} + \cO(v^2R^{-4}).
\]
By assumption we have \(|v|\lesssim \ln\tau\) and \(R\sim \sqrt\tau\), so \(v^2R^{-4} =\cO(\tau^{-2+\delta})\) for any \(\delta>0\).

If \(\tau\) is sufficiently large then the above equation has a unique solution \(u = U_y(t, v)\) with
\[
u = y - \frac{u^2+v^2-2v}{2\tau} + \cO(\tau^{-2+\delta}) = y - \frac{y^2+v^2-2v}{2\tau} + \cO(\tau^{-2+\delta}).\qedhere
\]
\end{proof}

\subsection{General \emph{ansatz} for the cap}
We now construct the cap by assuming that it is given by
\begin{equation}
\label{eq-cap}
X(t, p) =  e^{-tJ}\bigl\{R\ve1 + \varepsilon Z(t, p)\bigr\},
\text{ with } R(t) \isdef \cR(-2t),\; \varepsilon(t) = \frac{1}{R(t)},
\end{equation}
and by computing its deviation from Curve Shortening~\eqref{eq-discrepancy-def}
\[
(V-\kappa)ds = \langle X_t-X_{ss}, JX_s\rangle ds = \Bigl\langle X_t-\frac{X_{pp}}{\|X_p\|^2}, JX_p \Bigr\rangle \;dp \isdef W\, dp
\]
In the following computations it will be convenient to abbreviate
\[
R_\theta(t) = \cR'(-2t),
\]
so that \(R'(t) = -2 R_\theta(t)\).

The space derivatives of \(X\) are
\[
X_p = e^{-tJ}\varepsilon Z_p \qquad X_{pp} = e^{-tJ}\varepsilon Z_{pp}.
\]
The time derivative has a few more terms:
\begin{align*}
  X_t =e^{-tJ}\left\{
  -2 R_\theta \ve1
  - R \ve2
  + \varepsilon' Z
  - \varepsilon J Z + \varepsilon Z_t \right\}
\end{align*}
Express \(W = \langle X_t - X_{pp}/\|X_p\|^2, JX_p\rangle\) in terms of \(Z\), keeping in mind that \(\varepsilon=R^{-1}\),
\begin{multline}\label{eq-2-X} 
  W = \left\langle \varepsilon^2 Z_t - \frac{Z_{pp}}{\|Z_p\|^2}, JZ_p\right\rangle 
  - \langle\ve1, Z_p\rangle \\
  + 2\varepsilon R_\theta \langle \ve2, Z_p\rangle + \varepsilon\varepsilon'\langle Z, JZ_p\rangle - \varepsilon^2\langle Z, Z_p\rangle .
\end{multline}
We look for a cap in the form of a normal perturbation of the Grim Reaper curve, i.e.~we assume
\begin{equation}\label{eq-perturbed-GR-ansatz}
Z(t, p) = G(p) + f(t, p) JG_p(p)
\end{equation}
where
\[
G(p) = -\arcsin(\tanh p)\ve1 - \ln (\cosh p) \ve2
\]
is the arclength parametrization of the Grim Reaper.

Since \(G\) is an arclength parametrization \(G_p, JG_p\) are unit tangent and normal to the Grim Reaper.  Specifically,
\[
G_p = -\frac{1}{\cosh p}\ve1 -\tanh p \ve2.
\]
The parametrization \(G(p)\) traces the Grim Reaper out from right to left.  Furthermore, the curvature vector of the Grim Reaper is
\[
G_{pp}(p) = \kappa(p) JG_p, \text{ where }\kappa(p) = \frac{1}{\cosh p}.
\]

\subsection{Detailed computation of \(W\) on the cap} \label{subsec-computation-of-W}
We have
\begin{align*}
  Z_t &= f_t JG_p\\
  Z_p&= G_p+f_pJG_p + fJG_{pp}
       = (1-\kappa f) G_p + f_p JG_p \\
  JZ_p&= - f_p G_p + (1-\kappa f) JG_p  \\
  \|Z_p\|^2 &= (1-\kappa f)^2 + f_p^2 = 1 -2\kappa f +\kappa^2 f^2 + f_p^2 \\
  Z_{pp} &= -(\kappa_p f + 2\kappa f_p)G_p 
           +\bigl(f_{pp} + \kappa - \kappa^2 f\bigr)JG_p 
\end{align*}
Substituting in \eqref{eq-2-X}, and using \(\kappa = 1/\cosh p\), we get
\begin{align*}
  \left\langle \varepsilon^2 Z_t, JZ_p \right\rangle
  &= \varepsilon^2 (1-\kappa f)f_t \\
  \Bigl\langle \frac{Z_{pp}}{\|Z_p\|^2}, JZ_p \Bigr\rangle
  &= \frac{(1-\kappa f)(f_{pp}+\kappa(1-\kappa f)) + \kappa_p ff_p +2\kappa f_p^2}
    {1-2\kappa f + \kappa^2 f^2 + f_p^2} \\
  \langle \ve1, Z_p \rangle &= -\kappa(1-\kappa f) + f_p \tanh p .
\end{align*}
Thus after substituting and expanding we find
\begin{align}\label{eq-W-all-terms}
  W= \varepsilon^2 (1-\kappa f)f_t 
  - \frac{\kappa + f_{pp} - 2\kappa^2 f-\kappa ff_{pp} + \kappa_p ff_p  + \kappa^3f^2 +2\kappa f_p^2}
  {1-2\kappa f + \kappa^2 f^2 + f_p^2}
  + \kappa(1-\kappa f) \\ - f_p \tanh p 
  - \varepsilon^2 \langle Z, Z_p \rangle
  + 2\varepsilon R_\theta \langle \ve2, Z_p \rangle 
  + \varepsilon\varepsilon' \langle  Z, JZ_p \rangle .\notag
\end{align}
We will choose \(f(t, p)\) so as to make \(W\) integrable in space and time.  To find \(f\) we linearize the expression for \(W\) and solve the resulting first order equation for \(f\).  It turns out that one solution is of the form \(f(t, p) = \tau^{-1}F(p)\) for a function \(F\) that is of polynomial growth for \(|p|\to\infty\).  We restrict our attention to the region
\begin{equation}
|p|\leqslant 2K\ln \tau,\qquad  \tau \gg 1,
\end{equation}
where \(K=100\) is a fixed, largish, constant.  We will assume that \(f\) and its derivatives are bounded by
\begin{equation}\label{eq-f-bounded}
|f|+|f_p|+|f_{pp}| + \tau |f_t| \lesssim \tau^{-1+\delta}
\text{ for }|p|\leqslant 2K\ln\tau.
\end{equation}
Here, and in what follows, when we write estimates for remainder terms of the form \(\cO(\tau^{-m+\delta})\), the estimate is implicitly meant to hold ``for all \(\delta>0\).''

The bound \eqref{eq-f-bounded} will certainly hold if \(f(t, p) = \tau^{-1}F(p)\) for some function \(F(p)\) for which \(F(p)\), \(F'(p)\), and \(F''(p)\) grow polynomially as \(p\to\pm\infty\).

We now consider the many terms in \eqref{eq-W-all-terms} that add up to \(W\).  To begin, we have for \(|p|\leqslant 2K\ln\tau\),
\[
\varepsilon^2 (1-\kappa f)f_t =\cO(\tau^{-2+\delta}),
\]
and also
\[
\frac{-\kappa ff_{pp} + \kappa_p ff_p + \kappa^3f^2 +2\kappa f_p^2} {1-2\kappa f + \kappa^2 f^2 + f_p^2} = \cO(\tau^{-2+\delta}) \text{ for any }\delta>0.
\]
It follows from \(Z=G+fJG_p\) that
\begin{align*}
  Z= G + \cO\bigl(\tau^{-1+\delta}\bigr),\qquad
  Z_p = G_p + \cO\bigl(\tau^{-1+\delta}\bigr).
\end{align*}
Hence, for \(|p|\leqslant 2K\ln\tau\),
\begin{align*}
  \|G(p)\|&\lesssim \ln\tau &
                              \varepsilon^2\langle Z, Z_p \rangle
  &= \tau^{-1}\langle G, G_p \rangle + \cO\bigl(\tau^{-2+\delta}\bigr) \\
  \varepsilon R_\theta \langle \ve2 , Z_p \rangle
  &=\tau^{-1}\langle \ve2, G_p \rangle + \cO\bigl(\tau^{-2+\delta}\bigr) 
  &\quad\varepsilon\varepsilon'\langle Z, JZ_p \rangle 
  &= \cO\bigl(\tau^{-2+\delta}\bigr).
\end{align*}
In this computation we have used the expansions \(\varepsilon = R(t)^{-1} = \cR(-2t)^{-1} = \tau^{-1/2} + \cO(\tau^{-3/2})\) and \(R_\theta = \cR'(-2t) = \tau^{-1/2} + \cO(\tau^{-3/2})\) that follow from the expansions of \(\cR(\theta)\) and \(\cR'(\theta)\) in appendix~\ref{sec-YY-properties}.

So far we have
\begin{align*}
  W = \cO\bigl(\tau^{-2+\delta}\bigr)
  - \frac{\kappa + f_{pp} - 2\kappa^2 f}
  {1-2\kappa f + \kappa^2 f^2 + f_p^2}
  + \kappa(1-\kappa f) - f_p \tanh p \\
  + \tau^{-1} \langle 2\ve2 - G, G_p \rangle
\end{align*}
We can simplify the fraction (for \(|p|\leqslant 2K\ln\tau\)) by using
\[
\frac{1}{1-2\kappa f + \kappa^2 f^2 + f_p^2} =1+2\kappa f - \kappa^2 f^2 - f_p^2+ \frac{(-2\kappa f + \kappa^2 f^2 + f_p^2)^2} {1-2\kappa f + \kappa^2 f^2 + f_p^2} 
=1+2\kappa f + \cO\bigl(\tau^{-2+\delta}\bigr)
\]
which implies
\begin{align*}
  \frac{\kappa + f_{pp} - 2\kappa^2 f}
  {1-2\kappa f + \kappa^2 f^2 + f_p^2}
  = \kappa + 2\kappa^2 f + f_{pp} -2\kappa^2 f+ \cO\bigl(\tau^{-2+\delta}\bigr) 
  = \kappa + f_{pp} + \cO\bigl(\tau^{-2+\delta}\bigr) ,
\end{align*}
and hence
\[
W = -f_{pp}-\tanh(p) f_p - \kappa^2 f +\frac{1}{\tau} \langle 2\ve2-G, G_p \rangle +\cO(\tau^{-2+\delta}).
\]
\subsection{Computation of the correction term} \label{subsec-computation-of-correction}
We look to perturb the Grim Reaper with a term of the form
\[
f(t, p) = \frac{F(p)}{\tau},
\]
where \(F\) is a solution of
\[
\cL F \isdef  F_{pp}+\tanh(p) F_p + \kappa^2 F = \langle 2\ve2-G, G_p \rangle.
\]
The linear operator can be factored
\[
\cL = \frac{d^2}{dp^2}+\tanh(p)\frac{d}{dp} + \frac{1}{\cosh^2(p)} =\frac{d}{dp} \circ \frac{1}{\cosh p}\circ \frac{d}{dp} \circ \cosh(p)
\]
while we also have
\[
\langle 2\ve2-G, G_p \rangle = \frac{d}{dp}\bigl\langle 2\ve2-\tfrac12 G, G \bigr\rangle,
\]
all of which allows us to solve the equation for \(F\):
\begin{equation}
\label{eq-F-general-solution}
F(p) = \frac{A}{\cosh p} + B\tanh p + \int_0^p \frac{\cosh r}{\cosh p} \bigl\langle 2\ve2-\tfrac12 G(r), G(r) \bigr\rangle \, dr.
\end{equation}
It appears that the first term is of no use, so we set \(A=0\).  The resulting function \(F(p)\) is an odd function of \(p\).  We use the asymptotic behavior of \(\langle 2\ve2-\tfrac12 G, G \rangle\) for large \(p\) to find an expansion for the integral as \(p\to+\infty\).

Consider
\[
I \isdef \int_0^p\frac{\cosh r}{\cosh p} \bigl\langle 2\ve2-\tfrac12 G(r), G(r) \bigr\rangle \, dr.
\]
The explicit expression for \(G\) implies
\begin{align*}
  \langle \ve2, G(r) \rangle 
  &= -\ln\cosh r \\
  \|G(r)\|^2
  &= \bigl(\ln\cosh r\bigr)^2 + \bigl(\arcsin\tanh r\bigr)^2  
    =\bigl(\ln\cosh r\bigr)^2 + \frac{\pi^2}{4} + \cO(e^{-r})  
    \qquad (r\to\infty).  
\end{align*}
To compute \(I\) we substitute \(\lambda=\ln\cosh p\), \(\mu=\ln\cosh r\), which leads to
\[
I = -\int_0^\lambda e^{\mu-\lambda} \bigl\{2\mu + \tfrac12 \mu^2 + \tfrac{\pi^2}8 + \cO(e^{-\mu})\bigr\} \frac{d\mu}{\sqrt{1-e^{-2\mu}}}
\]
The integrand is singular but integrable at \(\mu=0\).  To deal with this singularity split
\[
(1-e^{-2\mu})^{-1/2} = 1 + \bigl[(1-e^{-2\mu})^{-1/2} - 1\bigr], \text{ with } 0\leqslant (1-e^{-2\mu})^{-1/2} - 1 \lesssim \frac{1}{\sqrt\mu} e^{-2\mu}.
\]
Replacing \((1-e^{-2\mu})^{-1/2}\) by \(1\) therefore introduces an extra term that is bounded by \(\cO(e^{-\lambda})\).  Hence we have
\begin{align*}
  I&=\cO(e^{-\lambda}) - \int_0^\lambda e^{\mu-\lambda}\bigl\{2\mu + \tfrac12 \mu^2 + \tfrac{\pi^2}8 + \cO(e^{-\mu})\bigr\} \, d\mu \\
   &=\cO(\lambda^3 e^{-\lambda}) - \int_0^\lambda e^{\mu-\lambda}\bigl\{2\mu + \tfrac12 \mu^2 + \tfrac{\pi^2}8 \bigr\} \, d\mu \\
   &=\cO(\lambda^3 e^{-\lambda}) -\bigl\{2\lambda + \tfrac12 \lambda^2 + \tfrac{\pi^2}8\bigr\}
     +\bigl\{2+\lambda\bigr\}-\{1\}\\
   &=-\tfrac12 \lambda^2 - \lambda - \tfrac{\pi^2}{8}+ 1 + \cO(\lambda^3  e^{-\lambda})
\end{align*}
Thus, for large \(p\) we get
\[
F(p) = B -\tfrac12 (\ln\cosh p)^2 - \ln\cosh p - \tfrac{\pi^2}{8}+ 1 +\cO\bigl(|p|^3e^{-|p|}\bigr) \qquad (p\to+\infty).
\]
Since \(F(p)\) is an odd function we also have
\[
F(p) = -\left\{B -\tfrac12 (\ln\cosh p)^2 - \ln\cosh p - \tfrac{\pi^2}{8}+1 \right\} +\cO\bigl(|p|^3e^{-|p|}\bigr) \qquad (p\to-\infty).
\]

Applying this to \(Z= G + \tau^{-1}F(p)JG_p\) we get for the two components \(u\) and \(v\) of \(Z\) as \(p\to\pm\infty\):
\begin{align*}
  u(t, p)  &=-\arcsin \tanh p + \frac{1}{\tau}F(p) \tanh p
             =\mp\frac\pi2 + \frac{1}{\tau}F(p) \tanh p + \cO(e^{-|p|})\\
  v(t, p)  &= -\ln\cosh p -\frac{1}{\tau}\frac{F(p)}{\cosh p} 
             =- \ln\cosh p  + \cO(|p|^3e^{-|p|})
\end{align*}
We can again eliminate \(p\) when \(p\) is large by using
\[
\ln \cosh p = -v  +\cO(p^2 e^{-|p|}), \qquad (p\to\pm\infty)
\]
which leads to
\begin{multline}
\label{eq-utp-expansion-1}
u(t, p) = \mp\frac\pi2
-\frac{v^2 - 2v +\pi^2/4 + 2(1 + B)}{2\tau}
+\cO(|p|^3 e^{-|p|}) \qquad(p\to\pm\infty)
\end{multline}
We now determine \(B\) by matching \eqref{eq-utp-expansion-1} with the representation of the Yin-Yang arms in \((u,v)\) coordinates that we found in \eqref{eq-leaves-in-uv-expansion}.  Setting \(y=\mp \pi/2\) in \eqref{eq-leaves-in-uv-expansion} we find for the outer and inner Yin-Yang arms
\begin{equation}
\label{eq-outer-leaves-expansion}
u = \mp\frac \pi2 - \frac{\pi^2/4+v^2-2v}{2\tau} + \cO(\tau^{-2+\delta})
\end{equation}
If \(|p|\geqslant \frac K2 \ln\tau\) then \(|p|^3e^{-|p|} = \cO(\tau^{-K/2+\delta}) = o(\tau^{-2+\delta})\), and therefore the two expansions \eqref{eq-utp-expansion-1},\eqref{eq-outer-leaves-expansion} match if
\begin{equation}
 B =-1.
\end{equation}

To summarize, we choose the cap to be given by
\begin{equation}\label{eq-cap-ansatz}
X(t, p) = e^{-tJ}\bigl\{ R(t)\ve1 + \frac{1}{R(t)}Z(t, p)\bigr\}
\end{equation}
with
\begin{equation}\label{eq-cap-with-correction}
Z(t, p) = G(p) + \frac{F(p)}\tau JG_p(p)
\end{equation}
and, from \eqref{eq-F-general-solution},
\begin{equation}\label{eq-explicit-correction}
  F(p) =\, -\tanh p + \int_0^p \frac{\cosh r}{\cosh p} \bigl\langle 2\ve2-\tfrac12 G(r), G(r) \bigr\rangle \, dr
\end{equation}

\subsection{Definition of the smooth interpolation of cap and arms} \label{subsec-interpolation-region}
In the \((u,v)\) coordinates, according to \eqref{eq-leaves-in-uv-expansion}, the Yin-Yang arms are given by
\[
u = U_\pm(t, v) \text{ for } |v|\leqslant 3K\ln \tau
\]
where we abbreviate \(U_\pm(t, v) = U_{\pm\pi/2}(t, v)\).  In the same \((u,v)\) coordinates the ends of the cap are given by \eqref{eq-utp-expansion-1} with, \(B=-1\), i.e.
\[
u = h_\pm(t, v) \text{ for } \tfrac12K \ln\tau \leqslant -v\leqslant 2K\ln \tau.
\]
We constructed the cap so that both \(U_\pm\) and \(h_\pm\) have the same asymptotic behavior, namely,
\begin{equation}\label{eq-hpm-def}
U_\pm(t, v) , h_\pm(t, v) =  
\pm \frac\pi2 -\frac1\tau \Bigl\{\tfrac12 v^2 - v + \pi^2/8\Bigr\}
+ \cO(\tau^{-2+\delta}).
\end{equation}

Choose a smooth nondecreasing function \(\eta:\R\to\R\) with \(\eta(u)=0\) for \(u\leqslant \frac 12\) and \(\eta(u) = 1\) for \(u\geqslant 2\), and define
\begin{align}\label{eq-interpolation}
  k_\pm(t, v)
  &= \eta\Bigl(\frac{-v}{\ln\tau}\Bigr) U_\pm(t, v) +
    \left\{1-\eta\Bigl(\frac{-v}{\ln\tau}\Bigr) \right\} h_\pm(t, v)\nonumber \\
  &= U_\pm(t, v) + \eta\Bigl(\frac{-v}{\ln\tau}\Bigr)
    \bigl(U_\pm(t, v)- h_\pm(t, v)\bigr).
\end{align}
The graphs of these two functions are \(Z\)-coordinate representations of curve segments that smoothly interpolate between the two ends of the cap and the two Yin-Yang arms.  The two segments are parametrized by
\[
X_\pm(t, v) = e^{-tJ}\bigl\{ R\ve1 + R^{-1}Z_\pm(t, v)\bigr\} \text{ with } Z_\pm(t, v) = k_\pm(t, v)\ve1 + v\ve2.
\]
It follows from~\eqref{eq-2-X} that the Curve Shortening Deficit for such curves is given by \((\kappa-V)ds = W_v dv\) with
\begin{equation}\label{eq-error-on-interpolation}
W_v[k] = -R^{-2} k_t + \frac{k_{vv}}{1+k_v^2} - k_v +2\frac{R_\theta}{R} + 2\frac{R_\theta}{R^3}(vk_v - k) -R^{-2}(v+kk_v).
\end{equation}

\subsection{Derivative bounds for \(U_\pm\), \(h_\pm\), \(k_\pm\)}
Careful scrutiny of the construction of \(U_\pm(t, v)\) and \(h_\pm(t, v)\) shows that the remainder terms \(\cO(\tau^{-2+\delta})\) in \eqref{eq-hpm-def} may be differentiated.  This implies that the functions \(U_\pm\) and \(h_\pm\) satisfy
\begin{equation} \label{eq-ghk-derivative-estimate} |k_{vv}|+|k_v|+|k_t| \lesssim \frac{\ln\tau}{\tau} \lesssim \tau^{-1+\delta}
\end{equation}
for \(\frac12\ln\tau \leqslant -v\leqslant 2K\ln\tau\), and large enough \(\tau\).

The derivatives of the gluing function \(\eta(v/\ln\tau)\) are
\[
\eta_v = -\frac{\eta'(-v/\ln\tau)}{\ln\tau},\quad \eta_{vv} = \frac{\eta''(-v/\ln\tau)}{(\ln\tau)^2},\quad \eta_t = -\frac{4v\eta'(-v/\ln\tau)}{(\ln\tau)^2},
\]
so they are bounded by
\[
|\eta_v|+|\eta_{vv}|+|\eta_t| \lesssim \bigl(\ln\tau\bigr)^{-1}
\]
for \(\frac12\ln\tau \leqslant -v \leqslant 2K\ln\tau\), and large enough \(\tau\).

It follows that the interpolating functions \(k_\pm = \eta U_\pm + (1-\eta) h_\pm \) also satisfy \eqref{eq-ghk-derivative-estimate}.

\subsection{Estimating \(W_v[k_\pm]\)}
We show that
\begin{equation}
\label{eq-Wq-kpm-bound}
\big|W_v[k_\pm] \big|\lesssim \tau^{-2+\delta}
\end{equation}
holds in the region \(\frac12 K\ln\tau \leqslant -v\leqslant 2K\ln\tau\), for sufficiently large \(\tau\).

If \(k\) is any of the functions \(U_\pm, h_\pm, k_\pm\) then we have
\begin{align*}
  \frac{|k_v^2k_{vv}|}{1+k_v^2} \leqslant |k_v^2k_{vv}| \lesssim \tau^{-3+\delta},\quad
  R^{-2}|kk_v| \lesssim \tau^{-3+\delta},\text{ and }
  -R^{-2}|k_t|\lesssim \tau^{-2+\delta}.
\end{align*}
Furthermore \(|k|\lesssim 1\), \(|vk_v|\lesssim \tau^{-1+\delta}\), and \(R_\theta\sim R^{-1}\lesssim \tau^{-1/2}\) lead to
\[
\frac{R_\theta}{R^3} |vk_v-k| \lesssim \tau^{-2}.
\]
Hence
\[
W_v[k] = k_{vv} -k_v +2\frac{R_\theta}{R} - v R^{-2} +\cO(\tau^{-2+\delta})
\]
holds for all six functions \( k \in \{U_\pm, h_\pm, k_\pm\}\).

To simplify our notation we drop the \(\pm\) subscript for now and expand the derivatives of \(k=\eta U+ (1-\eta)h\) (with \(U=U_\pm\) and \(h=h_\pm\)),
\begin{gather*}
k_v = \eta U_v + (1-\eta)h_v+ \eta_v (U-h),\\
k_{vv} = \eta U_{vv} + (1-\eta)h_{vv}+ 2\eta_v (U-h)_v + \eta_{vv}(U-h).
\end{gather*}
Since we have matched the two cap ends with the Yin-Yang arms, it follows from \eqref{eq-hpm-def} that the difference \(U-h\) and its derivatives are bounded by
\[
|U-h|+|(U-h)_v|+|(U-h)_{vv}|\lesssim \tau^{-2+\delta}.
\]
Together these inequalities give us the desired estimate for \(W_v[k_\pm]\), namely
\begin{align}\label{eq-deficit-bound-interpolation}
  W_v[k]
  &= W_v[\eta U+(1-\eta)h] \nonumber \\
  &=  \bigl(\eta U+(1-\eta)h\bigr)_{vv}
    -\bigl(\eta U+(1-\eta)h\bigr)_v
    +2\frac{R_\theta}{R} - v R^{-2} +\cO(\tau^{-2+\delta}) \nonumber \\
  &=\eta W_v[U]+(1-\eta)W_v[h] + \cO(\tau^{-2+\delta}) \nonumber \\
  &=\cO(\tau^{-2+\delta}).
\end{align}

\begin{definition}[The Approximate Solution]\label{def-approximate-solution}
Let $\cC_*(t): (-\infty, -T) \times \R  \rightarrow \R^2$ for some sufficiently large $T >0$ be the family of smooth curves formed by the concatenation of the Yin-Yang leaves $Y(\theta, t, -\tfrac{\pi}{2})$, $Y(\theta, t, +\tfrac{\pi}{2})$ cut off in a neighborhood of $R(t)\vE1(-t)$ and glued to the cap $X(t,p)$ defined by the ansatzes \eqref{eq-cap} and \eqref{eq-perturbed-GR-ansatz}, with $f(t,p) = \tau^{-1}F(p)$ and $F(p)$ given by \eqref{eq-F-general-solution} with $A=0$ and $B=-1$. The gluing between the arms of the cap and the two Yin-Yang segments is given by the interpolation $k_{\pm}(t,v)$ in \eqref{eq-interpolation}, which is done in a neighborhood of $R(t)\vE1(-t)$.
\end{definition}

\begin{lem}\label{lem-error-bound} The error 
\[
\cE(T) = \int_{-\infty}^{-T}\int_{\cC_*(t)}  \left| V -\kappa\right|\, ds\, dt <\infty 
\]
is finite on the approximate solution $\cC_*(t): (- \infty, -T) \times \R \rightarrow \R^2$ given by Definition \ref{def-approximate-solution}.
\end{lem}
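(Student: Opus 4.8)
The plan is to use the decomposition of $\cC_*(t)$ that is built into Definition~\ref{def-approximate-solution}: two long arcs of the Yin--Yang leaves $Y(\theta,t,\pm\tfrac\pi2)$, the Grim Reaper cap $X(t,p)$, and the two short gluing arcs $X_\pm(t,v)$ of \S\ref{subsec-interpolation-region}. I would bound the spatial deficit $\int_{\cC_*(t)}|V-\kappa|\,ds$ separately on each piece, add the contributions, and integrate the result in $t$. On the two arm arcs there is nothing to do: from \eqref{eq-YY-leaves-parametrized} one has $Y(\theta,t,y)=e^{tJ}Y(\theta-t,0,y)$, so these arcs are segments of the rotating Yin--Yang soliton (rotated by a fixed angle), evolve by rigid rotation, and hence are \emph{exact} solutions of Curve Shortening; thus $|V-\kappa|\,ds\equiv 0$ there.

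On the cap, \eqref{eq-discrepancy-def} reads $(V-\kappa)\,ds=W\,dp$ with $W$ the quantity in \eqref{eq-W-all-terms}, and the computations of \S\ref{subsec-computation-of-W}--\S\ref{subsec-computation-of-correction} --- carried out with $f(t,p)=\tau^{-1}F(p)$, $F$ given by \eqref{eq-explicit-correction} (so that $\cL F=\langle 2\ve2-G,G_p\rangle$, which kills the $\tau^{-1}$ term) and $B=-1$ --- already yield $W=\cO(\tau^{-2+\delta})$ uniformly on the cap's parameter range, an interval of length $\lesssim\ln\tau$. On the gluing arcs, \eqref{eq-error-on-interpolation} gives $(V-\kappa)\,ds=W_v[k_\pm]\,dv$, and \eqref{eq-deficit-bound-interpolation} gives $W_v[k_\pm]=\cO(\tau^{-2+\delta})$ on a $v$-range of length $\lesssim\ln\tau$. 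In both cases, integrating the pointwise bound over a parameter interval of length $\lesssim\ln\tau$ produces a contribution $\lesssim\ln\tau\cdot\tau^{-2+\delta}\lesssim\tau^{-2+\delta}$, the logarithm being absorbed into the arbitrary $\delta>0$.

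Summing the arm, cap, and gluing contributions gives $\int_{\cC_*(t)}|V-\kappa|\,ds\lesssim\tau^{-2+\delta}$ with $\tau=-4t$, valid for $t\leqslant -T$ once $T$ is taken large enough that all the ``$\tau\gg1$'' hypotheses invoked in \S\ref{subsec-computation-of-W} and \S\ref{subsec-interpolation-region} hold on $(-\infty,-T]$; this is where the constant $T$ in the statement is fixed. Integrating in time and substituting $\tau=-4t$,
\[
\cE(T)=\int_{-\infty}^{-T}\!\!\int_{\cC_*(t)}|V-\kappa|\,ds\,dt\;\lesssim\;\int_{-\infty}^{-T}(-4t)^{-2+\delta}\,dt\;=\;\frac14\!\int_{4T}^{\infty}\!\tau^{-2+\delta}\,d\tau\;\lesssim\;T^{-1+\delta},
\]
which is finite for any $0<\delta<1$; note that this already recovers the $|t|^{-1+\delta}$ rate appearing in the Main Theorem.

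I expect the main obstacle to lie not in this assembly --- which is essentially bookkeeping --- but in the two pointwise deficit estimates $W=\cO(\tau^{-2+\delta})$ and $W_v[k_\pm]=\cO(\tau^{-2+\delta})$; these hinge on the exact cancellations engineered by the choices of $F$ and $B$, and are precisely the content of \S\ref{subsec-computation-of-W}--\S\ref{subsec-interpolation-region}. Within the lemma itself the only point needing a little care is that the bulk of $\cC_*(t)$ is the \emph{exact} Yin--Yang leaf, whereas $U_\pm(t,v)$ in the interpolation \eqref{eq-interpolation} is only its $(u,v)$-graph expansion from \eqref{eq-leaves-in-uv-expansion}; since these agree to within $\cO(\tau^{-2+\delta})$, the mismatch across the gluing contributes at most $\cO(\tau^{-2+\delta})$ over a $v$-interval of length $\lesssim\ln\tau$ and does not affect the conclusion.
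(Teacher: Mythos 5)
Your proposal is correct and follows essentially the same route as the paper's proof: decompose $\cC_*(t)$ into the exact Yin--Yang arms (zero deficit), the cap (where $W=\cO(\tau^{-2+\delta})$ from \S\ref{subsec-computation-of-W}--\S\ref{subsec-computation-of-correction}), and the transition region (where $W_v[k_\pm]=\cO(\tau^{-2+\delta})$ from \eqref{eq-deficit-bound-interpolation}), integrate each pointwise bound over a parameter interval of length $\lesssim\ln\tau$, and then integrate in time. The only additions beyond the paper's argument are your explicit evaluation of the time integral and the remark about the mismatch at the gluing, both of which are consistent with the paper.
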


\begin{proof}
It suffices to show that the Curve Shortening Deficit $|V- \kappa| ds$ is $L^1$-integrable in space and time on three regions: the cap, the transition region discussed in \S \ref{subsec-interpolation-region}, and the unmodified Yin-Yang curve. Since the Yin-Yang curve is a solution to Curve Shortening Flow, the deficit $|V- \kappa|ds = 0$, which leaves only cap and the transition region as contributing to the error.

The cap, given by the expressions \eqref{eq-cap-ansatz}, \eqref{eq-cap-with-correction}, and \eqref{eq-explicit-correction}, is defined on the region $|p| \le 2K\ln \tau$, where $p$ is the arc-length coordinate $p$ for the grim reaper $G(p)$. In \S \ref{subsec-computation-of-W}-\S \ref{subsec-computation-of-correction}, the Curve Shortening Deficit $|V- \kappa| ds$ is written in terms of $p$ as $ W dp$, and it is shown to be $W = \cO(\tau^{-2+\delta})$ for $\delta > 0$. Integrating over the cap, we have
\[
\int_{\text{cap}} |V- \kappa| ds \le \int_{-2K\ln \tau}^{2K\ln \tau} W dp \le 4K\ln \tau \cO(\tau^{-2+\delta}) = \cO(\tau^{-2+\delta}).
\]
This quantity is integrable in time, and thus the contribution to the error $\cE$ on the cap is bounded.

In \S \ref{subsec-interpolation-region}, the Curve Shortening Deficit on the transition region is written in terms of the parameter $v$ as $W_v[k]dv$ on the interval $\tfrac{1}{2} \le -v \le 2K\ln \tau$. Furthermore, in \eqref{eq-deficit-bound-interpolation} it is shown that $W_v[k] = \cO(\tau^{-2+\delta})$. Integrating over both curves in the transition region, we have 
\[
\int_{\text{trns. reg.}} |V- \kappa| ds = 2 \int_{\tfrac{1}{2}K\ln \tau}^{2K\ln \tau} W_v[k] dv \le 3K\ln \tau \cO(\tau^{-2+\delta}) = \cO(\tau^{-2+\delta}).
\]
As before, this is integrable in time and thus the contribution to the error is bounded. Therefore, the sum of the integrals of the deficit over each region is $\cO(\tau^{-2+\delta})$. This completes the proof of the lemma.
\end{proof}

\section{Area decreasing property of Space Curve Shortening}
\label{sec-area-growth-bound}
In 1991 Altschuler and Grayson \cite{MR1158337} observed that for two solutions of space curve shortening the area of the minimal surface spanning them is non increasing.  Here we elaborate on this and prove a similar result without using the existence of the minimal surface.

\subsection{Moving space curves}
\label{sec-moving-space-curves}
For an immersed curve \(X:\R\to\R^n\) one defines the arc length one-form \(ds\) and the arc length derivative \(\partial_s\) of any quantity \(f:\R\to\R\) by
\[
ds = \|X_p\|\, dp, \text{ and } \frac{\partial f} {\partial s} = \frac{1} {\|X_p\|} \frac{\partial f} {\partial p}.
\]
The unit tangent and curvature of the curve are \(X_s = \frac{X_p} {\|X_p\|}\) and \(X_{ss}\).

A moving family of space curves is a map \(X:(t_0, t_1)\times\R \to \R^n\).  The family evolves by Curve Shortening if it satisfies \( X_t^\perp = X_{ss} \), i.e., if for some smooth function \(\lambda(t, p)\) one has
\begin{equation}
\label{eq-scs}
X_t = X_{ss}+ \lambda X_s 
= \frac{1}{\|X_p\|}\frac{\partial}{\partial p} \Bigl(\frac{X_p}{\|X_p\|}\Bigr) + \lambda \frac{X_p}{\|X_p\|}
\end{equation}
Since \(X_s\perp X_{ss}\), one can always find \(\lambda\) from \(\lambda = \langle X_{t}, X_s \rangle\).

\subsection{Evolution of arc length and the commutator \texorpdfstring{\([\partial_t, \partial_s]\)}{[dt, ds]}}
\label{sec-evolution}
The following are commonly used relations.  We record them here for completeness, and also because we allow the velocity \(X_t\) of the parametrizations to have a nonvanishing tangential component.  Assuming that \(X_t=X_{ss}+\lambda X_s\) one has
\begin{equation}
\label{eq-ds-evol}
\partial_t \|X_p\| = (\lambda_s - \kappa^2)\|X_p\|,
\qquad
\frac{\partial }{\partial t}ds  = (\lambda_s-\kappa^2) ds = d\lambda - \kappa^2 ds,
\end{equation}
and
\begin{equation}
\label{eq-ds-dt-commutator}
[\partial_t, \partial_s] = (-\lambda_s+\kappa^2)\partial_s .
\end{equation}
\begin{proof}
   
We have
\[
\frac{\partial}{\partial t}\|X_p\| = \Bigl\langle \frac{X_p}{\|X_p\|}, X_{tp}\Bigr\rangle = \langle X_s, X_{tp}\rangle = \langle X_s, X_{ts} \rangle \|X_p\|,
\]
and hence
\[
\frac{\partial}{\partial t} ds = \frac{\partial}{\partial t} \bigl(\|X_p\| dp\bigr) = \langle X_s, X_{tp}\rangle dp = \bigl\langle X_s, X_{ts}\bigr\rangle ds .
\]
The evolution equation \(X_t=X_{ss}+\lambda X_s\) then implies
\[
\bigl\langle X_s, X_{ts}\bigr\rangle 
= \langle X_s, X_t \rangle_s - \bigl\langle X_{ss}, X_t\bigr\rangle 
= \lambda_s - \bigl\langle X_{ss}, X_{ss}+\lambda X_s\bigr\rangle 
=\lambda_s-\|X_{ss}\|^2 =\lambda_s- \kappa^2,
\]
which directly implies \eqref{eq-ds-evol}.  Using \eqref{eq-ds-evol} we get
\[ [\partial_t, \partial_s] = \left[\partial_t, \|X_p\|^{-1}\partial_p\right] = -\frac{\partial_t\|X_p\|}{\|X_p\|^{2}} \partial_p =-(\lambda_s-\kappa^2)\partial_s.\qedhere
\]
\end{proof}

\subsection{Dependence on a parameter}
Let \([\epsilon_0, \epsilon_1]\subset\R\) be some parameter interval, and let \(X:[\epsilon_0, \epsilon_1]\times(t_0,t_1)\times\R \to \R^n\) be a family of moving curves that depends on a parameter \(\epsilon\in [\epsilon_0, \epsilon_1]\).  We compute the evolution of the first variation
\[
X_\epsilon = \partial_\epsilon X(\epsilon, t, p).
\]
Throughout the computation we will assume that the parametrization \(X\) is such that
\begin{equation}
\label{eq-hypothesis-Xeps-perp}
X_\epsilon \perp X_p(\epsilon, t, p) \text{ for all \((\epsilon, t, p)\)}
\end{equation}
For any given parametrization \(\tilde X\) one can find a reparametrization \(\varphi(\epsilon, t, p)\) so that \(X(\epsilon, t, p) = \tilde X(\epsilon, t, \varphi(\epsilon, t, p))\) satisfies \eqref{eq-hypothesis-Xeps-perp}.

If \(X:[\epsilon_0,\epsilon_1]\times[0,1]\to\R^n\) is injective with \(X_\varepsilon\perp X_p\), then the double integral
\[
\int_{\epsilon_0}^{\epsilon_1} \int_{p=0}^1 \|X_\epsilon\| \, ds \,d\epsilon \qquad (\text{where }ds = \|X_p\|dp)
\]
is the area (2-dim Hausdorff measure) of the image \(X([\epsilon_0,\epsilon_1]\times[0,1])\).  If \(X\) merely satisfies \(X_\epsilon\perp X_p\), without necessarily being injective, then the area formula implies that the double integral is bounded from below, by
\begin{equation}
\ell(X)\isdef
\int_{\epsilon_0}^{\epsilon_1} \int_{p=0}^1 \|X_\epsilon\| \, ds \,d\epsilon 
\geqslant
\mathcal{H}^2(X([\epsilon_0,\epsilon_1]\times[0,1]))
\end{equation}
We will call the integral \(\ell(X)\) the \emph{length of the homotopy \(X\)}, and we will show that Curve Shortening decreases the length of homotopies.

The following improvement of the inequality \(\|X_\epsilon\|_s \leqslant \|X_{\epsilon s}\|\) (which follows from the Cauchy-Schwarz inequality) will be useful.

\subsection{Lemma}\label{lem-improved-triangle-ineq}\itshape 
Assuming \eqref{eq-hypothesis-Xeps-perp} we have
\[
\bigl(\|X_\epsilon\|_s\bigr)^2 \leqslant \|X_{\epsilon s}\|^2 - \langle X_{ss}, X_\epsilon \rangle^2.
\]
\upshape
\begin{proof}
Split \(X_{\epsilon s}\) into tangential and orthogonal components:
\[
X_{\epsilon s} = P + \langle X_{\epsilon s}, X_s \rangle X_s
\]
Since \(X_\epsilon\perp X_s\) we have $\langle X_{\epsilon s}, X_s\rangle = \partial_s\langle X_\epsilon, X_s\rangle-\langle X_\epsilon, X_{ss}\rangle = -\langle X_\epsilon, X_{ss}\rangle$.  
Therefore
\[
X_{\epsilon s} = P - \langle X_\epsilon, X_{ss} \rangle X_s,
\]
and thus
\[
\|X_{\epsilon s}\|^2 = \|P\|^2 + \langle X_\epsilon, X_{ss} \rangle^2.
\]
On the other hand
\[
\|X_\epsilon\|_s = \Bigl\langle \frac{X_\epsilon}{\|X_\epsilon\|}, X_{\epsilon s} \Bigr\rangle = \Bigl\langle \frac{X_\epsilon}{\|X_\epsilon\|}, P \Bigr\rangle \leqslant \|P\|,
\]
where we have again used that \(X_s\perp X_\epsilon\).  Combining these observations we arrive at
\[
\bigl(\|X_\epsilon\|_s\bigr)^2 - \|X_{\epsilon s}\|^2 \leqslant \|P\|^2 - \|X_{\epsilon s}\|^2 = - \langle X_\epsilon, X_{ss} \rangle^2,
\]
as claimed.
\end{proof}

\subsection{The commutator \texorpdfstring{\([\partial_\epsilon, \partial_s]\)}{[deps, ds]}}\itshape
Assuming \eqref{eq-hypothesis-Xeps-perp} one has
\[ [\partial_\epsilon, \partial_s] = \langle X_{ss}, X_\epsilon \rangle\partial_s \text{ and } [\partial_\epsilon , \partial_s^2] = 2\langle X_{ss}, X_\epsilon \rangle \partial_s^2 + \langle X_{ss}, X_\epsilon \rangle_s \partial_s
\]
\upshape
\begin{proof}
The computation follows the same pattern as the derivation of \eqref{eq-ds-dt-commutator}.  Here we have no equation for \(X_\epsilon\), but we do know that \(X_\epsilon\perp X_s\).  Thus
\[
\partial_\epsilon \|X_p\| = \left\langle\frac{X_p}{\|X_p\|}, X_{p\epsilon}\right\rangle = \langle X_s, X_{\epsilon p} \rangle = \langle X_s , X_{\epsilon s} \rangle \|X_p\| = -\langle X_{ss}, X_\epsilon \rangle \|X_p\|.
\]
Apply this to \(\partial_s=\|X_p\|^{-1}\partial_p\) to get the commutator \([\partial_\epsilon, \partial_s]\).  The other commutator follows from expanding \([\partial_\epsilon, \partial_s^2] = [\partial_\epsilon, \partial_s]\partial_s + \partial_s[\partial_\epsilon, \partial_s]\).
\end{proof}
\subsection{Lemma}\label{lem-Xeps-diff-ineq}\itshape
The length of the first variation \(X_\epsilon\) satisfies the differential inequality
\[
\partial_t \|X_\epsilon\| -\lambda \partial_s \|X_\epsilon\| \leqslant \partial_s^2\|X_\epsilon\| + \kappa^2 \|X_\epsilon\|.
\]\upshape
\begin{proof}
Differentiating the evolution equation \eqref{eq-scs} for \(X\) we get
\begin{align*}
  \partial_t X_{\epsilon} = \partial_\epsilon X_{t}
  &= \partial_\epsilon\left(X_{ss}+\lambda X_s\right) \\
  &= X_{\epsilon ss} + 2 \langle X_{ss}, X_\epsilon \rangle X_{ss} 
    +\langle X_{ss}, X_\epsilon \rangle_s X_s
    +\lambda_\epsilon X_s + \lambda \partial_\epsilon X_{s}.
\end{align*}
Hence
\begin{align*}
  \partial_t X_{\epsilon} -\lambda \partial_s X_\epsilon 
  &= X_{\epsilon ss} + 2 \langle X_{ss}, X_\epsilon \rangle X_{ss} 
    +\langle X_{ss}, X_\epsilon \rangle_s X_s
    +\lambda_\epsilon X_s + \lambda [\partial_\epsilon, \partial_s]X\\
  &= X_{\epsilon ss} + 2 \langle X_{ss}, X_\epsilon \rangle X_{ss} 
    + \left\{\langle X_{ss}, X_\epsilon \rangle_s 
    +\lambda_\epsilon  + \lambda \langle X_{ss}, X_\epsilon \rangle\right\} X_s
\end{align*}
We next compute the evolution of \(\|X_\epsilon\|^2\), keeping in mind that \(X_\epsilon\perp X_s\):
\begin{align*}
  (\partial_t-\lambda \partial_s)\|X_\epsilon\|^2 &= 2\langle X_\epsilon, \partial_t X_{\epsilon} -\lambda \partial_s X_\epsilon \rangle \\
                             &= 2\langle X_\epsilon, X_{\epsilon ss} \rangle 
                               + 4 \langle X_{ss}, X_\epsilon \rangle^2 \\
                             &= \bigl(\|X_\epsilon\|^2\bigr)_{ss} - 2 \|X_{\epsilon s}\|^2 + 4\langle X_{ss}, X_\epsilon \rangle^2\\
                             &=2\|X_\epsilon\| \,\|X_\epsilon\|_{ss}
                               + 2 \bigl(\|X_\epsilon\|_s\bigr)^2 - 2 \|X_{\epsilon s}\|^2 + 4 \langle X_{ss}, X_\epsilon \rangle^2 .
\end{align*}
At this point we use Lemma~\ref{lem-improved-triangle-ineq}, to get
\[
(\partial_t-\lambda \partial_s)\|X_\epsilon\|^2 \leqslant 2\|X_\epsilon\| \,\|X_\epsilon\|_{ss} + 2 \langle X_{ss}, X_\epsilon \rangle^2.
\]
Since \((\partial_t-\lambda \partial_s) \|X_\epsilon\|^2 = 2\|X_\epsilon\|(\partial_t-\lambda \partial_s)\|X_\epsilon\|\), we have
\[
\partial_t\|X_\epsilon\| -\lambda \partial_s\|X_\epsilon\| \leqslant \|X_\epsilon\|_{ss} + \Bigl\langle X_{ss}, \frac{X_\epsilon}{\|X_\epsilon\|} \Bigr\rangle^2 \|X_\epsilon\| \leqslant \|X_\epsilon\|_{ss} + \kappa^2\|X_\epsilon\|.\qedhere
\]
\end{proof}

\subsection{Contractive property of Curve Shortening}
\label{sec-CS-contracts-homotopies}
If \(X^0, X^1:[t_0, t_1]\times[0, 1]\to\R^n\) are two solutions of Curve Shortening~\eqref{eq-scs}, then a homotopy \(\{X^\epsilon : 0\leqslant \epsilon\leqslant 1\}\) of solutions to Curve Shortening connecting them is, by definition, a map \(X:[0,1]\times[t_0, t_1]\times[0,1]\to\R^n\) such that \((t, p) \mapsto X(\varepsilon, t, p)\) is a solution of Curve Shortening with \(X^\epsilon(t, p) = X(\epsilon, t, p)\) for \(\epsilon\in\{0,1\}\).

Given any homotopy \(X^\epsilon\) between solutions \(X^0, X^1\) of Curve Shortening, one can always find a reparametrization \(\tilde X^\epsilon(t, p) = X^\epsilon(t, \varphi(\epsilon, t, p))\) for which \(\partial_\epsilon \tilde X^\epsilon \perp \partial_s \tilde X^\epsilon\) holds pointwise.  We will call such a homotopy a \emph{normal homotopy.}

Our main observation in this section is the following: \itshape if \(X^\epsilon\) is a normal homotopy between solutions \(X^0, X^1\) of Curve Shortening, then one has for each \(\epsilon\in[0,1]\) and \(t\in(t_0, t_1)\)
\begin{equation}
\label{eq-infinitesimal-contraction}
\frac{d}{dt}\int_{p=0}^1 \|\partial_\epsilon X (\epsilon, t, p)\|\,ds \leqslant 0
\end{equation}
and
\begin{equation}
\label{eq-homotopy-contraction}
\int_{\epsilon=0}^1\int_{p=0}^1 \|\partial_\epsilon X(\epsilon, t_1, p)\|\,ds\,d\epsilon
\leqslant
\int_{\epsilon=0}^1\int_{p=0}^1 \|\partial_\epsilon X(\epsilon, t_0, p)\|\,ds\,d\epsilon
\end{equation}\upshape
\begin{proof}
We use \eqref{eq-ds-evol} and Lemma \ref{lem-Xeps-diff-ineq} to differentiate under the integral:
\begin{multline*}
\frac{d}{dt}\int_{p=0}^1 \|X_\epsilon\|\,ds \leqslant\int\left\{\|X_\epsilon\|_{ss}+\kappa^2 \|X_\epsilon\| + \lambda\|X_\epsilon\|_s +(\lambda_s - \kappa^2)\|X_\epsilon\|
\right\}\,ds \\
=\int\bigl\{\|X_\epsilon\|_{s} + \lambda\|X_\epsilon\| \bigr\}_s\,ds =0.
\end{multline*}
This implies \eqref{eq-infinitesimal-contraction}.  Integration in \(\epsilon\) and in time then leads to \eqref{eq-homotopy-contraction}.
\end{proof}

\subsection{Deviation from an approximate solution}\label{sec-deviation-from-CSF} We
now consider the case of two moving curves \(X^0, X^1:[t_0, t_1]\times[0,1]\to\R^n\) with the same initial value, i.e.~with \(X^0(t_0, p) = X^1(t_0, p)\) for all \(p\in[0,1]\).  We assume that \(X^1\) is a solution of Curve Shortening but allow \(X^0\) to be a general moving curve.  We measure its deviation from Curve Shortening in terms of
\begin{equation}\label{eq-dev-from-CS-def}
\Delta \isdef
\int_{t_0}^{t_1} \int_{p=0}^1 \bigl\| (\partial_t X^0)^\perp - \partial_s^2X^0\bigr\| \,ds\,dt.
\end{equation}
In the case of plane curves \(X:[t_0,t_1]\times [0, 1]\to\R^2\), we have \(\partial_t(X^0)^\perp = V JX^0_s\), where \(V\) is the normal velocity of the curve \(X^0\).  Therefore the integrand in \eqref{eq-dev-from-CS-def} is 
\[
\bigl\| (\partial_t X^0)^\perp - \partial_s^2X^0\bigr\|\, ds = |V-\kappa|\,ds.
\]
The quantity \(\Delta\) therefore coincides with the ``error'' \(\cE\) defined in \eqref{eq-error-def}.

Assume that for each \(\epsilon\in[t_0, t_1]\) there is a smooth solution \((t, p)\mapsto X(\epsilon, t, p)\) of Curve Shortening that is defined for \(t\in [\epsilon, t_1]\), and that has initial value \(X(\epsilon, \epsilon, p) = X^0(\epsilon, p)\).  After reparametrizing we may assume that \(X_\epsilon\perp X_p\) holds point-wise.  Then the final values of these solutions, i.e.~the curves \(p\mapsto X(\epsilon, t_1, p)\) form a normal homotopy from \(X^0(t_1, \cdot)\) to \(X^1(t_1, \cdot)\).  We will now show that
\begin{equation}
\label{eq-deviation-from-CS}
\int_{\epsilon=t_0}^{t_1} \int_{p=0}^1 \| \partial_\epsilon X(\epsilon, t_1,p)\| \,ds\,d\epsilon \leqslant \Delta.
\end{equation}
\begin{proof} Our argument is a nonlinear version of the Variation of Constants Formula, or of Duhamel's principle.

For \(\epsilon\in[t_0, t_1]\) we consider
\[
E(\epsilon) \isdef \int_{\bar\epsilon=t_0}^\epsilon \int_0^1 \|\partial_\epsilon X(\bar\epsilon, t_1, p)\|\,ds\,d\bar\epsilon.
\]
Then
\begin{align*}
  E'(\epsilon) = \int_0^1 \|\partial_\epsilon X(\epsilon, t_1, p)\|\,ds
\end{align*}
The contraction property \eqref{eq-infinitesimal-contraction} implies
\[
\int_0^1 \|\partial_\epsilon X(\epsilon, t_1, p)\|\,ds \leqslant \int_0^1 \|\partial_\epsilon X(\epsilon, \epsilon, p)\|\,ds.
\]
We compute \(X_\epsilon(\epsilon,\epsilon,p)\) by differentiating the relation \(X(\epsilon,\epsilon,p) = X^0(\epsilon,p)\) with respect to \(\epsilon\):
\[
\bigl(\partial_t X^0\bigr)(\epsilon, p) =\frac{\partial X^0(\epsilon, p)}{\partial\epsilon} =\frac{\partial X(\epsilon, \epsilon, p)}{\partial\epsilon} =X_\epsilon(\epsilon,\epsilon,p) + X_t(\epsilon,\epsilon,p).
\]
Since \((t, p)\mapsto X(\epsilon,t,p)\) evolves by Curve Shortening, we have \(X_t(\epsilon,\epsilon,p) = X_{ss}(\epsilon,\epsilon,p)+\lambda X_s\).  By definition of \(X\) we have \(X(\epsilon, \epsilon, p)= X^0(\epsilon, p)\), so \(X_{ss}(\epsilon,\epsilon,p) = X^0_{ss}(\epsilon,p)\).

We have parameterized the homotopy \(X(\epsilon,t,p)\) so that \(X_\epsilon\perp X_p\), and therefore
\[
\partial_\epsilon X(\epsilon,\epsilon,p) = \bigl(\partial_t X^0(\epsilon, p)\bigr)^\perp - X^0_{ss}(\epsilon, p).
\]
Hence
\[
E'(\epsilon)\leqslant \int_0^1 \|\partial_\epsilon X(\epsilon, \epsilon, p)\|\,ds =\int_0^1 \left\|\bigl(\partial_t X^0(\epsilon, p)\bigr)^\perp - X^0_{ss}(\epsilon, p) \right\| \,ds
\]
Integrate over \(\epsilon\in[t_0, t_1]\) to recover \eqref{eq-deviation-from-CS}.
\end{proof}

\subsection{Application to plane Curve Shortening}\label{sec-CS-area-decrease}
Let \(X^0, X^1:[t_0, t_1]\times[0,1]\to\R^2\) be two moving curves that are embedded at all time.  Assume \(X^1\) evolves by Curve Shortening, and assume that initially \(X^1\) lies in the closed region enclosed by \(X^0\), i.e.~for all \(p\in[0,1]\) the point \(X^1(t_0, p)\) lies in the region enclosed by the simple curve \(p\mapsto X^0(t_0, p)\).

Assume furthermore that at each time \(t_*\in[t_0, t_1]\) the area enclosed by \(p\mapsto X^0(t_*, p)\) is at least \(2\pi(t_1-t_*)\).  By the Gage-Hamilton-Grayson theorem this guarantees that the solution to Curve Shortening starting at \(X^0(t_*, \cdot)\) exists until time \(t_1\).

We now consider two homotopies \(A\) and \(B\) of evolving curves.  The first is the homotopy defined in the proof in the previous section~\ref{sec-deviation-from-CSF}, i.e.~for each \(\epsilon\in[t_0, t_1]\) we consider the solution \((t, p)\mapsto X(\epsilon, t, p)\) to Curve Shortening defined for \(t\in[\epsilon, t_1]\) and starting from \(X(\epsilon,\epsilon,p) = X^0(\epsilon, p)\).  Our first homotopy is then the family of final curves \(A(\epsilon, p) = X(\epsilon, t_1, p)\) of these solutions.  In section~\ref{sec-deviation-from-CSF} we showed that the length of the homotopy \(A\) is bounded from above by
\[
\ell(A)
= \int_{t_0}^{t_1}\int_0^1 \|A_\epsilon(\epsilon, p)\|\,ds\,d\epsilon
\leqslant \int_{t_0}^{t_1}\int_0^1 \left\|(\partial_t X^0)^\perp - X^0_{ss}\right\| \,ds\,dt.
\]

The second homotopy is constructed by evolving a homotopy between the two initial curves \(p\mapsto X^j(t_0, p)\) (\(j=0,1\)).  Since \(X^1\) initially lies inside \(X^0\) we can choose the homotopy \((\epsilon, p)\mapsto \bar X(\epsilon, t_0, p)\) so that its length is exactly the area of the region between the two initial curves, and so that the curve \(p\mapsto \bar X(\epsilon, t_0, p)\) lies inside the curve \(p\mapsto \bar X(\epsilon', t_0, p)\) if \(0\leqslant \epsilon \leqslant \epsilon'\leqslant 1\).  Given this initial homotopy let \((t, p)\mapsto \bar X(\epsilon, t, p)\) be the solution to Curve Shortening starting at \(\bar X(\epsilon, t_0, p)\).  Since all initial curves enclose \(X^1(t_0, p)\) the corresponding solutions exist for \(t\in[t_0, t_1]\), and possibly longer.  Our second homotopy is now \(B(\epsilon, p) = \bar X(\epsilon, t_1, p)\).  As explained in section~\ref{sec-CS-contracts-homotopies}, the length of the homotopy \(B\) is bounded by the length of the initial homotopy \((\epsilon, p)\mapsto \bar X(\epsilon, t_0, p)\).  We had chosen this initial homotopy so that its length is exactly the area between the two curves \(p\mapsto X^0(t_0, p)\) and \(p\mapsto X^1(t_0, p)\).

By concatenating the two homotopies \(A\) and \(B\) we obtain a combined homotopy \(A\#B\) between \(p\mapsto X^0(t_1, p)\) and \(p\mapsto X^1(t_1, p)\).  The length of this homotopy is bounded by
\begin{multline*}
\ell(A\#B)= \ell(A)+ \ell(B)
\leqslant \text{Area between \(X^0(t_0,\cdot)\) and \(X^1(t_0, \cdot)\)}\\
+ \int_{t_0}^{t_1}\int_0^1 \left\|(\partial_t X^0)^\perp - X^0_{ss}\right\| \,ds\,dt
\end{multline*}
If \(\Omega^0(t)\) and \(\Omega^1(t)\) are the regions enclosed by \(X^0(t, \cdot)\) and \(X^1(t,\cdot)\), then the homotopy \(A\#B\) between the two curves at time \(t_1\) must pass through each point in the interior of the symmetric difference \(\Omega^0(t_1)\triangle \Omega^1(t_1)\), as one sees by considering the winding numbers of the curves in the homotopy around any point in \(\Omega^0(t_1)\triangle \Omega^1(t_1)\).  It follows that the area of \(\Omega^0(t_1)\triangle \Omega^1(t_1)\) is a lower bound for the length of the homotopy \(A\#B\), and thus we conclude that
\begin{multline}
\label{eq-homotopy-area-bound}
\text{Area of }\Omega^0(t_1)\triangle \Omega^1(t_1)\leqslant \\
\text{Area between \(X^0(t_0,\cdot)\) and \(X^1(t_0, \cdot)\)} + \int_{t_0}^{t_1}\int_0^1 \left\|(\partial_t X^0)^\perp - X^0_{ss}\right\| \,ds\,dt.
\end{multline}

\section{Convergence}
\label{sec-convergence}
In this section, we obtain uniform curvature bounds on a sequence of ``really old solutions'' \(\{\cC_j(t) \}\) and extract a subsequence of solutions that converges locally smoothly to an ancient solution of curve shortening flow (CSF).

\begin{thm}\label{unifcurvbounds}
There exists a \(T\) such that for any \(T^* > T\), the curvatures \(|\kappa_j|\) of the ``really old solutions'' \(\{\cC_j(t)\}_j\) are bounded independently of \(j\) on the interval \([-T^* - 1/4, -T^*]\).
\end{thm}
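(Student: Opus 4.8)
The plan is to combine the area estimate of Section~\ref{sec-area-growth-bound} with a barrier argument and an interior parabolic estimate. For a \emph{fixed} $T^*$, the approximate solution $\cC_*(t)$, restricted to the slightly larger interval $[-T^*-\tfrac12,-T^*]$, is a single smooth embedded curve with $\|\cC_*(t)\|_{C^3}\leqslant C(T^*)$: the arms have small curvature, while the cap is a rescaled, perturbed Grim Reaper whose curvature is of order $R(t)=\cR(-2t)\sim\sqrt{\tau}\lesssim\sqrt{T^*}$. So it suffices to show that, for all sufficiently large $j$, the curve $\cC_j(t)$ is a normal graph of small height over $\cC_*(t)$ throughout $[-T^*-\tfrac12,-T^*]$, and then to feed this into an interior estimate to bound $|\kappa_j|$ on $[-T^*-\tfrac14,-T^*]$ in terms of $T^*$ alone. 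The curvatures of the finitely many $\cC_j$ with $j$ small are automatically bounded on this interval, since each such $\cC_j$ is a fixed smooth solution that, by the area condition and the Gage--Hamilton--Grayson theorem, exists smoothly up to time $-T^*$.

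First I would fix $T$ so large that $-T<t_0$ and, by Lemma~\ref{lem-error-bound}, that the tail of the error satisfies $\int_{-\infty}^{-T}\int_{\cC_*(t)}|V-\kappa|\,ds\,dt<\epsilon_0$, where $\epsilon_0=\epsilon_0(T^*)$ is a small constant fixed later. Given $T^*>T$ I would also choose the initial curves $\cC_j(-j)$ to be normal graphs over $\cC_*(-j)$ of height tending to $0$ as $j\to\infty$ --- which the construction permits --- and in particular sandwiched between an inner and an outer normal pushoff of $\cC_*(-j)$. Applying the area bound~\eqref{eq-homotopy-area-bound} to the pair $(\cC_*,\cC_j)$ over $[-j,t]$ then yields
\[
\mathrm{Area}\bigl(\Lambda_j(t)\triangle\Omega_*(t)\bigr)\leqslant\mathrm{Area}\bigl(\cC_*(-j)\triangle\cC_j(-j)\bigr)+\int_{-j}^{-T^*}\int_{\cC_*(t)}|V-\kappa|\,ds\,dt\leqslant 2\epsilon_0
\]
for all large $j$ and all $t\in[-T^*-\tfrac12,-T^*]$, where $\Lambda_j(t)$ and $\Omega_*(t)$ denote the regions enclosed by $\cC_j(t)$ and $\cC_*(t)$.

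Next I would upgrade this area closeness to a normal-graph representation. The rigidly rotating Yin--Yang leaves $\{Y(\cdot,t,y)\}$ are, for each fixed $y$, genuine solutions of Curve Shortening --- they are rotations of the rotating soliton --- and distinct leaves are disjoint away from the origin; near the tip of $\cC_*$ one has in addition the barriers furnished by shrinking circles and by rescaled, rotating Grim Reapers. Sandwiching the graphical initial curve between such barriers and invoking the avoidance principle, $\cC_j(t)$ remains in a neighborhood of $\cC_*(t)$ whose width is controlled by the initial sandwich, hence $\lesssim\epsilon_0$, for all $t$ up to $-T^*$. Since these barriers form a smooth foliation in a neighborhood of $\cC_*(t)$, the monotonicity of intersection numbers for Curve Shortening shows that $\cC_j(t)$ cannot fold back within this neighborhood, so it is a normal graph $u_j(\cdot,t)$ over $\cC_*(t)$ with $\|u_j(\cdot,t)\|_{C^0}\lesssim\epsilon_0$ uniformly on $[-T^*-\tfrac12,-T^*]$, and with length $\leqslant C(T^*)$. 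In these coordinates $u_j$ solves a uniformly parabolic quasilinear equation differing from the graphical Curve Shortening equation only by terms controlled by $\|\cC_*\|_{C^3}\lesssim C(T^*)$ and by the small deficit of $\cC_*$. An Ecker--Huisken-type interior gradient estimate on the parabolic cylinder over $[-T^*-\tfrac12,-T^*]$ then bounds $|\partial_s u_j|$ at times $\geqslant-T^*-\tfrac14$ by a constant depending on $\epsilon_0$, the elapsed time $\tfrac14$, and $C(T^*)$; once $\epsilon_0$ is chosen small enough that the graph stays within the reach of $\cC_*$, a further interior estimate bounds $|\partial_s^2 u_j|$, whence $|\kappa_j|\leqslant C(T^*)$ on $[-T^*-\tfrac14,-T^*]$, uniformly in $j$.

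The main obstacle is the upgrade in the previous paragraph, from the area estimate to a genuine normal-graph representation with length control. Near the tip $\cC_*$ has curvature of order $\sqrt{\tau}$ and the relevant length scale is of order $R^{-1}$, so the barrier foliation must be set up carefully there, and one must verify that the intersection-number argument ruling out fold-backs survives across the interpolation region~\S\ref{subsec-interpolation-region} where the cap is glued to the Yin--Yang arms. Granting that, the remaining ingredients are routine: the area closeness is exactly the estimate of Section~\ref{sec-area-growth-bound} together with the finiteness of the error (Lemma~\ref{lem-error-bound}), and the curvature bound is a standard application of interior parabolic estimates with a constant allowed to depend on $T^*$.
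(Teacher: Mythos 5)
Your overall architecture (area bound from Section~\ref{sec-area-growth-bound} $\to$ $L^\infty$ closeness $\to$ graphical representation $\to$ interior parabolic estimates) matches the paper's announced strategy, and the final step is indeed routine once graphicality with controlled gradient and length is in hand. But there is a genuine gap at exactly the step you flag as ``the main obstacle'': the upgrade from the $L^1$ (area) bound to an $L^\infty$ (normal graph) bound, and the barrier argument you sketch does not accomplish it. First, $\cC_*(t)$ is only an approximate solution, so its normal pushoffs are not barriers and the avoidance principle cannot be applied to them. Second, the genuine barriers you invoke --- the Yin--Yang leaves $Y(\cdot,t,y)$ --- foliate the region \emph{transversally} to the spiral: avoidance with respect to them confines $\cC_j(t)$ to the band $|y|\leqslant\pi/2$ but gives no control whatsoever on the position of the tip of $\cC_j(t)$ \emph{along} the spiral, i.e.\ on the polar angle $\vartheta_j^+(t)$ at which $\cC_j(t)$ turns around. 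A shrinking circle survives only for a time proportional to its area, and a rotated, rescaled Grim Reaper is not a solution, so neither supplies a longitudinal barrier valid on all of $[-j,-T^*]$. Third, the area bound by itself is perfectly consistent with $\cC_j(t)$ developing a long, thin ``tail'' of small area running far ahead of the tip of $\cC_*(t)$; such a tail destroys both the normal-graph representation and the length bound $L_j(t)\leqslant C(T^*)$ that your interior estimates require.

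The paper closes this gap with two ingredients absent from your proposal. The first is the one-sided curvature bound $\kappa-\langle X,X_s\rangle>0$ (Theorem~\ref{thm-kappa+xxs-Nonnegative}), obtained by a maximum-principle argument from the square-profile initial data; it controls the tangent angle along any putative tail. The second is Lemma~\ref{linftytip}: combining that tangent-angle control with the Sturmian graph decompositions (Lemma~\ref{lem-polargraphdecomp}), one shows that whenever the tip of $\cC_j(t)$ overshoots the tip of $\cC_*(t)$ by more than a fixed angle, the area enclosed by the overshooting arc decreases at a definite rate (at least $\pi/4$), so the overshoot can persist only for a bounded time and hence a bounded angle. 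Only after this does one obtain uniform length bounds; the paper then concludes not via Ecker--Huisken-type estimates on a normal graph but by a softer route: the length bound gives $\int\!\!\int\kappa^2\,ds\,dt\leqslant L$, hence a time slice with $\int\kappa^2\,ds\leqslant L$, hence uniform $C^{1,1/2}$ compactness, and then Grayson's theorem together with continuous dependence on initial data yields the uniform curvature bound. You would need to supply an argument of comparable substance for the tip control before the rest of your outline can be carried out.
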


The strategy to obtain these bounds is as follows: a) decompose an element \(\cC_j(t)\) of this sequence into the union of several graph representations, b) use the \(L^1\) bound on the error to obtain \(L^\infty\) estimates for these graphs, and c) apply the standard estimates for divergence-form quasilinear parabolic equations to establish a uniform curvature bound.

\medskip

Let \(T^*>0\) be a large positive number, which may be increased as necessary throughout this section.  The obvious candidates for the sequence of ``really old solutions'' are the CSF solutions defined on $[-j,-T^*]$ starting at $\cC_*(-j)$ at time $-j$ ---~call these $\Gamma_j(t)$.  At any time \(t \in [-j, -T^*]\), the unsigned area enclosed by the curves \(\cC_*(t)\) and \(\Gamma_j(t)\) is bounded by the quantity
\[ \int_{-j}^t \int_{\cC_*(\tau)} |V - \kappa| ds \; d\tau.
\]
By Lemma \ref{lem-error-bound}, this quantity is in \(L^1\) and given \(\epsilon >0\), we can find \(T^*>0\) such that
\[\cE(T^*) = \int_{-\infty}^{-T^*} \int_{\cC_*(\tau)} |V - \kappa| ds \; d\tau < \epsilon.
\]
By \eqref{eq-homotopy-area-bound}, this estimate gives a uniform bound on the unsigned area between \(\Gamma_j(t)\) and \(\cC_*(t)\) for any sufficiently large \(j\) and \(t < -T^*\), whenever defined.

\medskip

For simplicity of calculation, we consider an alternative sequence of ``really old solutions'' \(\{\cC_j(t)\}\), called ``square-profile approximations'', which do \textbf{not} satisfy the initial condition \(\cC_j(-j) = \cC_*(-j)\).  They are constructed as follows: Let \([\vartheta_*^-(t), \vartheta_*^+(t)]\) be the interval of polar angles that contain the approximate solution at a given time~\(t\).  Let \(r=\cR(\theta)\) be the central branch of the Yin-Yang foliation, so the Yin-Yang solution is given by the two branches \(r=\cR(\theta + \pi/2 -t)\) and \(r=\cR(\theta - \pi/2 -t)\).  We define \(\cC_j(t)\) to be the solution of curve shortening which at time \(t=-j\) is given by
\begin{itemize}
\item Two arms of the Yin-Yang soliton \(r=\cR(\theta - \pi/2 -t)= \cR(\theta - \pi/2 + j)\), and
  \(r=\cR(\theta + \pi/2 -t) = \cR(\theta + \pi/2 + j)\) truncated at \(\theta=\vartheta_*^+(-j)\);
\item a straight line segment connecting the two arms of the Yin-Yang soliton.  This segment is part of the ray \(\theta = \vt_*^+(-j)\).
\end{itemize}
Notice that at each time $t$ along the flow, the square-profile approximations \(\cC_j(t)\) enclose the curves $\Gamma_j(t)$, the CSF solutions starting from \(\cC_*(-j)\), and
that the area bounded by these two solutions stays constant along the flow, for all \(t \in [-j, -T^*]\).  The area \(A_j\) between \(\cC_j(-j)\) and \(\cC_*(-j)\) is small and goes to zero as \(j\to\infty\).  Thus, the area between the old solution \(\cC_j(t)\) with the ``square initial data'' and the approximate solution \(\cC_*(t)\) is bounded by
\[
  \mathop{\textrm{Area}} (\cC_j(t), \cC_*(t)) \leqslant A_j + \int_{-j}^t \int_{\cC_*(\tau)}
  |V-\kappa|\,ds\,d\tau.
\]

In order to improve these area bounds to \(L^\infty\) bounds, we will use the geometry of the \(\cC_j(t)\) and several properties of CSF.  In particular, we often appeal to the maximum principle and the following Sturmian property for intersections of curve shortening flows.

\begin{thm}\label{sturmian} Consider two CSF solutions
  \(\gamma_0, \gamma_1: [T_1, T_2) \times [0,1] \rightarrow \R^2\), for which
  \[ \partial \gamma_0 (t) \cap \gamma_1(t) = \partial \gamma_1(t) \cap \gamma_0(t) =
    \emptyset
  \]
  holds for any \(t \in [T_1, T_2)\).  Then the number of intersections of \(\gamma_0(t)\)
  and \(\gamma_1(t)\) is a finite and non-increasing function of \(t \in (T_1, T_2)\).  It
  decreases whenever \(\gamma_0\) and \(\gamma_1\) have a tangency.
\end{thm}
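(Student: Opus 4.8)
\emph{The plan} is to reduce the statement to the Sturmian theorem for scalar one‑dimensional linear parabolic equations by writing the two flows locally as graphs, and then to patch the local statements together by a compactness argument which is made possible precisely by the hypothesis on the endpoints.

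\emph{Local reduction.} Suppose $P=\gamma_0(t_0,p_0)=\gamma_1(t_0,q_0)$ for some $t_0\in(T_1,T_2)$. Both $\gamma_i(t,\cdot)$ are smooth immersions depending smoothly on $t$, so I would choose Cartesian coordinates $(x,y)$ centred at $P$ whose $x$‑axis is not parallel to either of the tangent lines $T_{p_0}\gamma_0(t_0)$ and $T_{q_0}\gamma_1(t_0)$. Then, for $t$ in a short interval around $t_0$ and parameters near $p_0$ and $q_0$, the two curves are graphs $y=u_0(x,t)$ and $y=u_1(x,t)$ over a common $x$‑interval $I$, intersections near $P$ are exactly the zeros of $w:=u_0-u_1$, and tangencies correspond to points where $w=w_x=0$. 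Each $u_i$ solves the graphical curve shortening equation $\partial_t u=\partial_x(\arctan u_x)$, so $w$ solves the uniformly parabolic linear equation $w_t=\beta\,w_{xx}+\beta_x\,w_x$, where $\beta(x,t)=\int_0^1\bigl(1+((u_1)_x+s((u_0)_x-(u_1)_x))^2\bigr)^{-1}\,ds$ is smooth and bounded between positive constants on compact sets, and there is no zeroth order term. Applying the Sturmian theorem for such equations on a cylinder $I\times(t_0-\eta,t_0]$ — after shrinking $I$ so that $w\neq0$ on $\partial I\times(t_0-\eta,t_0]$ — yields: the zero count of $w(\cdot,t)$ is finite for each $t$ in the cylinder, non‑increasing, and strictly decreasing across any time at which $w(\cdot,t)$ has a multiple zero.

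\emph{Globalisation.} Fix a compact subinterval $[a,b]\subset(T_1,T_2)$. Since $\partial\gamma_0(t)\cap\gamma_1(t)=\partial\gamma_1(t)\cap\gamma_0(t)=\emptyset$ for all $t$, compactness gives $c>0$ with $\dist(\gamma_i(t,e),\gamma_{1-i}(t,[0,1]))\ge c$ for $t\in[a,b]$ and $e\in\{0,1\}$; hence for some $\varepsilon>0$ no intersection of $\gamma_0(t)$ and $\gamma_1(t)$ with $t\in[a,b]$ uses a parameter in $[0,\varepsilon)\cup(1-\varepsilon,1]$ on either curve, so all intersections lie in a fixed compact ``core''. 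For finiteness at $t_0\in(T_1,T_2)$: if there were infinitely many intersection points they would accumulate at some $P$ in the core; but for $t_0>T_1$ the solutions are real‑analytic in the space variable (parabolic smoothing), so the local difference $w(\cdot,t_0)$ is real‑analytic and, unless the two arcs coincide near $P$ — a degenerate case excluded since by analyticity it would force the curves to coincide on a whole component — its zeros are isolated, contradicting the accumulation. Thus $N(t_0):=\#\bigl(\gamma_0(t_0,[0,1])\cap\gamma_1(t_0,[0,1])\bigr)<\infty$. For monotonicity and the tangency claim, given $t_0$ enclose the $N(t_0)$ intersection points in disjoint balls $B_1,\dots,B_{N(t_0)}$, each so small that $\overline{B_k}$ meets the intersection set only at its centre $P_k$ and carries a graphical chart as above; by continuity every intersection of $\gamma_0(t),\gamma_1(t)$ for $t$ near $t_0$ lies in $\bigcup_k B_k$, the count in $B_k$ equals the zero count of the associated $w^{(k)}$, and $w^{(k)}\neq0$ on $\partial B_k$ for $t$ near $t_0$. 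Summing the local Sturmian statements, $N(t)=\sum_k N_k(t)$ is non‑increasing near $t_0$, hence on all of $(T_1,T_2)$; and if some $P_k$ is a tangency then the corresponding $N_k$ strictly decreases across $t_0$ while the other $N_j$ do not increase, so $N$ strictly decreases.

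The step requiring the most care is the interface between the local and global pictures: one must ensure that, as $t$ varies near a given $t_0$, intersection points can be neither created nor destroyed outside the finitely many fixed charts — which is exactly where the endpoint hypothesis enters, ruling out intersections migrating in from the ends — and one must dispatch the degenerate possibility that arcs of $\gamma_0$ and $\gamma_1$ coincide. The one‑dimensional Sturmian theorem for $w_t=\beta w_{xx}+\beta_x w_x$ is taken as a known input rather than reproved.
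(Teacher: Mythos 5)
Your proposal is correct, but note that the paper itself offers no proof of Theorem~\ref{sturmian}: it is stated as a known background fact (it is Angenent's intersection--number theorem for curve shortening, obtained exactly as you describe by reducing to the Sturmian theorem for the scalar equation $w_t=\partial_x(\beta w_x)$ satisfied by the difference of two local graph representations). So there is nothing in the paper to compare against line by line; what you have written is a faithful reconstruction of the standard argument. The two places that genuinely need care are the ones you flag, and your handling of both is essentially right: (i) the patching step, where you must rule out intersections appearing for $t$ near $t_0$ outside the finitely many charts --- this follows from compactness of the ``core'' plus the fact that off the balls $B_k$ the two curves are a positive distance apart at time $t_0$, with the endpoint hypothesis supplying the compact core in the first place; and (ii) the degenerate case where $w\equiv 0$ near an accumulation point. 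For (ii) it is worth saying explicitly how the endpoint hypothesis finishes the job: if the arcs coincide on a maximal common sub-arc $A$, then by real-analyticity an endpoint of $A$ cannot be an interior point of both curves, so it must be an endpoint of $\gamma_0$ lying on $\gamma_1$ or vice versa, which is exactly what the hypothesis forbids. One further small point: ``non-increasing in a neighborhood of every $t_0$'' does imply ``non-increasing on $(T_1,T_2)$,'' but this deserves the one-line connectedness argument rather than being left implicit. With those remarks your outline is a complete and correct proof modulo the scalar Sturmian theorem, which is the same black box the paper relies on.
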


There is a useful related theorem for inflections points.

\begin{thm}\label{thm-inflections} 
  Let \(\gamma: [T_1, T_2) \times S^1 \rightarrow \R^2\) be a solution of CSF.
  Then, for any \(t \in (T_1, T_2)\), \(\gamma(t)\) has at most a finite number of
  inflection points, and this number does not increase with time.  In fact, it
  drops whenever the curvature \(\kappa\) has a multiple zero.
\end{thm}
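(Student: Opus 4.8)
The plan is to identify the inflection points of $\gamma(t)$ with the zeros of its curvature, to observe that the curvature evolves by a linear parabolic equation along the flow, and then to invoke the Sturm-type theorem on the nodal set of such an equation on $S^1$.

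First I would note that, since $\gamma$ is a smooth solution of CSF on $(T_1,T_2)$, its curvature $\kappa=\kappa(t,p)$ is smooth on $S^1\times(T_1,T_2)$, and that a point $p$ is an inflection point of $\gamma(t)$ precisely when $\kappa(t,p)=0$. Next I would record the evolution equation of $\kappa$. Allowing the parametrization to carry a tangential velocity $\lambda$ as in \S\ref{sec-evolution}, so that $X_t=X_{ss}+\lambda X_s$, a short computation of the same flavour as those in \S\ref{sec-evolution} gives
\[
\partial_t\kappa=\partial_s^2\kappa+\lambda\,\partial_s\kappa+\kappa^{3}.
\]
Written in the fixed parameter $p$, this is a linear, uniformly parabolic equation $\kappa_t=a(p,t)\,\kappa_{pp}+b(p,t)\,\kappa_p+c(p,t)\,\kappa$ with $a=\|X_p\|^{-2}>0$, $b$ smooth, and $c=\kappa^{2}\ge 0$; on every compact time-interval $[T_1',T_2']\subset(T_1,T_2)$ the coefficients are smooth and bounded because $\gamma$ is a smooth solution there.

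The core of the argument is then the classical theorem on the zero set of a solution of such a parabolic equation on the circle — the Sturmian comparison principle underlying Theorem~\ref{sturmian}; see also \cite{MR1158337}. It yields: for every $t>T_1$ the zero set of $\kappa(\cdot,t)$ is finite, the number $Z(t)$ of its zeros is a non-increasing function of $t$, and $Z(t)$ drops strictly at any time $t_0$ at which $\kappa(\cdot,t_0)$ has a multiple zero, i.e.\ a point where $\kappa$ and $\kappa_p$ vanish simultaneously. Translating back to $\gamma$, this says exactly that $\gamma(t)$ has finitely many inflection points for each $t\in(T_1,T_2)$, that this count does not increase with $t$, and that it drops whenever $\kappa$ has a degenerate zero, which is the assertion of the theorem.

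The step I expect to require the most care is verifying the hypotheses of the parabolic zero-set theorem: one must check that the coefficients $a,b,c$ are continuous and bounded on $S^1$ over compact subintervals of $(T_1,T_2)$ — immediate from smoothness of the CSF solution — and one must ensure that the tangential term $\lambda\,\partial_s\kappa$, which appears only because we permit reparametrizations, is harmless, as it merely contributes a bounded first-order coefficient and does not change the count of zeros. The genuinely nontrivial inputs, as opposed to routine bookkeeping, are the finiteness of $Z(t)$ for \emph{every} $t>T_1$ and the strict drop at degenerate zeros; both are supplied by the Sturm–Angenent theorem rather than by a bare maximum principle.
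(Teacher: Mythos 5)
The paper does not actually prove Theorem~\ref{thm-inflections}: it is stated as a known fact, implicitly citing the Sturmian theory for linear parabolic equations (Angenent's zero-set theorems), just as Theorem~\ref{sturmian} is. Your proposal supplies exactly the standard argument behind that citation: inflection points are the zeros of $\kappa$, the curvature satisfies $\kappa_t=\kappa_{ss}+\lambda\kappa_s+\kappa^3$, one regards $\kappa^3=(\kappa^2)\cdot\kappa$ as a zeroth-order term with bounded coefficient $c=\kappa^2$ so that $\kappa$ solves a linear uniformly parabolic equation on $S^1$ with smooth coefficients, and the periodic Sturmian theorem then gives finiteness, monotonicity, and the strict drop at degenerate zeros. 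This is correct and is the intended proof; the only point worth adding is that the zero-set theorem requires $\kappa\not\equiv 0$ at each time, which holds because a closed immersed curve cannot have identically vanishing curvature.
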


While the curves \(\cC_j(t)\) are not convex, we do have a one sided curvature bound.
\begin{thm}\label{thm-kappa+xxs-Nonnegative}
If \(\kappa\) is the curvature of a counterclockwise oriented parametrization \(X\) of the curves \(\cC_j(t)\), then
\[
\kappa - \langle X, X_s\rangle >0.
\]
\end{thm}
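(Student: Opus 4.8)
The quantity $q \isdef \kappa - \langle X, X_s\rangle$ is a familiar one in the study of rotating solitons: on the Yin--Yang soliton itself one has exactly $q = 0$ (this is the soliton equation $\kappa = \langle X, X_s\rangle$ for the rotator, up to the sign conventions), so the claim is that along the flow $\cC_j(t)$ starting from the square initial data, $q$ stays strictly positive. The natural approach is a parabolic maximum principle applied to the evolution equation for $q$, combined with a check of the sign of $q$ on the initial curve $\cC_j(-j)$ and on the ``boundary'' where the estimate could a priori fail.

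The plan is as follows. First I would derive the evolution equation for $q = \kappa - \langle X, X_s\rangle$ under Curve Shortening $X_t = \kappa J X_s$ (equivalently $X_t^\perp = X_{ss}$, with a tangential reparametrization term that does not affect the geometric quantity). Using the standard formulas $\partial_t \kappa = \kappa_{ss} + \kappa^3$ and $\partial_t(\text{position paired with tangent})$ — here one computes $\partial_t\langle X, X_s\rangle$ using $X_t = \kappa J X_s$, $\partial_t X_s = \kappa_s J X_s$ (in the normal gauge), and $X_{ss} = \kappa J X_s$ — one finds after cancellation an equation of the form
\begin{equation*}
\partial_t q = q_{ss} + \kappa^2 q + (\text{lower order / transport terms}),
\end{equation*}
i.e. a linear parabolic equation for $q$ with no zeroth-order term of bad sign beyond $\kappa^2 q$, which is harmless for a minimum principle from below (if $q$ has an interior spatial minimum that is negative, the $\kappa^2 q$ term pushes it back up). The computation of this evolution equation, with the correct rotator-adapted terms, is the part I expect to require the most care: one must track the $J$'s and confirm that the cross terms between $\kappa_s$, $\kappa$, and $\langle X, JX_s\rangle$ organize into a clean divergence-form (or at worst transport) structure, and in particular that the coefficient of $\langle X, JX_s\rangle$ — the support function — either vanishes or appears with a sign compatible with the maximum principle. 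This is where the specific geometry of the rotating soliton (that $\cC_*$ is a rotator) really enters.

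Next I would verify the sign on the initial data. The curve $\cC_j(-j)$ consists of two genuine arcs of the (rotated) Yin--Yang soliton, on which $q = 0$ identically, together with a straight line segment on the ray $\theta = \vt_*^+(-j)$. On a straight segment $\kappa = 0$, so there $q = -\langle X, X_s\rangle$; since the segment is part of a ray through the origin and the curve is oriented counterclockwise around the enclosed region $\Omega(-j)$, the tangent $X_s$ along that segment points (inward toward the origin, say) so that $\langle X, X_s\rangle \le 0$, giving $q \ge 0$ there. So on the initial curve $q \ge 0$, with strict positivity on the open segment and $q = 0$ on the Yin--Yang arcs. To get strict positivity for $t > -j$, I would invoke the strong maximum principle on the closed curve $\cC_j(t)$ (it is a closed embedded curve, so there is no spatial boundary): since $q \ge 0$ initially and $q \not\equiv 0$, the strong minimum principle for the parabolic operator $\partial_t - \partial_s^2 - \kappa^2 - (\cdots)$ forces $q > 0$ for all $t \in (-j, -T^*]$. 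I would also remark that the hypothesis that $\cC_j$ is embedded (hence the flow exists and stays smooth on the relevant interval, by the area assumption and Gage--Hamilton--Grayson invoked earlier) is what licenses applying the maximum principle on a genuine closed curve.

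The main obstacle, then, is twofold: getting the evolution equation for $q$ exactly right — in particular checking that the tangential-gauge terms and the support-function term do not spoil the sign structure — and making sure the initial straight segment really does contribute $\langle X, X_s\rangle \le 0$ with the orientation convention in force ($X$ counterclockwise, so that the enclosed region is on the left). Everything after that is a standard strong-maximum-principle argument. If the coefficient of the support function turns out to have an inconvenient sign, the fallback is to note that $\langle X, JX_s\rangle$ (the support function) is itself bounded on the compact curves $\cC_j(t)$ over the finite time interval $[-T^*-1/4,-T^*]$, absorb that term as a bounded zeroth-order coefficient, and still run the minimum principle — the bound $q \ge 0$ is preserved by any linear parabolic operator with bounded coefficients provided the zeroth-order coefficient multiplying $q$ can be made nonpositive by the usual exponential rescaling $q \mapsto e^{-Ct} q$, and strict positivity then follows from the strong maximum principle as before.
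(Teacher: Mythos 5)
Your proposal is correct and follows essentially the same route as the paper: the paper shows that \(\kappa\) and \(\langle X, X_s\rangle\) each satisfy the linearized equation \(\partial_t\sigma = \sigma_{ss}+\kappa^2\sigma\) (so no support-function term survives and your fallback is unnecessary), checks that \(\sigma=\kappa-\langle X,X_s\rangle\) vanishes on the Yin--Yang arcs and is positive on the radial segment by the orientation argument you give, and concludes by the maximum principle. The one point you leave untreated is that \(\cC_j(-j)\) has two corners where the segment meets the arms; the paper rounds these off with circular arcs of radius \(\rho\ll1\), on which \(\kappa=\rho^{-1}\gg1\) forces \(\sigma>0\), and then lets \(\rho\searrow0\).
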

\begin{proof}
Assuming that the parametrization \(X\) is normal (\(X_t\perp X_s\)), the curvature evolves by
\[
\kappa_t = \kappa_{ss}+\kappa^3.
\]
A short computation using \(X_t=X_{ss}\) and \(\|X_s\|=1\) shows that
\[
\bigl(\partial_t-\partial_{s}^2\bigr)\|X\|^2 = 2\langle X_t, X\rangle - 2 \langle X_{ss}, X\rangle - 2\|X_s\|^2 = -2.
\]
Differentiating with respect to arclength, using the commutator \([\partial_t, \partial_s] = \kappa^2\partial_s\), and also \(\partial_s \|X\|^2 = 2\langle X, X_s\rangle\) we get
\[
\partial_t \langle X, X_s\rangle
= \partial_s^2\langle X, X_s\rangle +  \kappa^2\langle X, X_s\rangle.
\]
Hence \(\kappa\) and \(\langle X, X_s\rangle\) satisfy the same linear equation.  Therefore \(\sigma = \kappa-\langle X, X_s\rangle\) also satisfies
\[
\sigma_t = \sigma_{ss}+\kappa^2 \sigma.
\]
The quantity \(\sigma\) vanishes on the rotating soliton (see the appendix).

The square-profile initial curves \(\cC_j(-j)\) consist of two arcs.  One is the Yin-Yang soliton, so on this arc we have \(\sigma=0\).  The other arc is the radial line segment on the ray \(\theta=\vartheta_*^+(-j)\).  On this segment we clearly have \(\kappa=0\).  Since we orient \(\cC_j\) counterclockwise, \(X\) and \(X_s\) are parallel with opposite directions; i.e.~\(-\langle X, X_s\rangle > 0\).  Hence \(\sigma>0\) on the line segment.  Finally, the initial curve \(\cC_j(-j)\) is not smooth, having two corners where the line segment and Yin-Yang arms meet.  If one rounds these corners off by replacing them with small circle arcs with radius \(\rho\ll1\), then the curvature of these arcs will be \(\kappa=\rho^{-1}\gg1\), so that \(\sigma>0\) on the circular arcs, provided \(\rho\) is sufficiently small.  The resulting curve has \(\sigma=0\) on the Yin-Yang arms, and \(\sigma>0\) on the line segment, as well as the small circular arcs.  The solution to CS starting from the modified initial curve therefore has \(\sigma>0\).  Letting \(\rho\searrow 0\) we conclude that \(\sigma>0\) also holds on \(\cC_j(t)\).
\end{proof}

With Theorem \ref{sturmian}, we can decompose the solutions \(\cC_j(t)\) into exactly two graphs over the polar angle parameter.

\begin{lem}\label{lem-polargraphdecomp}
For any \(t \in (-j, -T^*]\), there is an interval \([\vt^-_j(t), \vt^+_j(t)]\) such that the curve \(\cC_j(t)\) can be written as the union of two graphs of polar functions, \(R_j^-(\theta, t)\) and \(R_j^+(\theta, t)\) defined for \(\theta \in [\vt^-_j(t), \vt^+_j(t)]\).
The functions \(t\mapsto \vartheta^-_j(t)\) and \(t\mapsto \vartheta^+_j(t)\) are strictly increasing and decreasing, respectively.
\end{lem}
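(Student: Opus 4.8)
\emph{Proof proposal.}
The plan is to reduce the statement to counting the \emph{radial tangencies} of \(\cC_j(t)\), i.e.\ the points where the tangent line passes through the origin. For a counterclockwise parametrization \(X(t,\cdot)\) of \(\cC_j(t)\), the polar identity \(\langle X, JX_s\rangle = -|X|^2\,\partial_s\theta\) (with \(\theta\) a continuous lift of the polar angle along the curve) shows these are exactly the critical points of the lifted polar angle along \(\cC_j(t)\). Once one knows the origin is not enclosed by \(\cC_j(t)\) — so that the winding number of \(\cC_j(t)\) about the origin is \(0\) and \(\theta\) is a genuine function on \(\cC_j(t)\cong S^1\) — the curve is a union of two polar graphs over an interval \([\vt^-_j(t),\vt^+_j(t)]\) exactly when \(\theta\) has a single local maximum (the tip of the cap, with \(\vt^+_j(t)=\max\theta\)) and a single local minimum (the spiral center, with \(\vt^-_j(t)=\min\theta\)); the two subarcs of \(\cC_j(t)\) cut out by these two points are then the two graphs. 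So the claim amounts to: \(\cC_j(t)\) has exactly two radial tangencies for every \(t\in(-j,-T^*]\).

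The lower bound is automatic, since \(\theta\) is non-constant on the closed curve \(\cC_j(t)\) and so has at least one maximum and one minimum. For the upper bound I would use Theorem~\ref{sturmian}, applied to \(\cC_j(t)\) and a (static) line through the origin: the intersection number is finite and non-increasing in \(t\). If a new radial tangency appeared at some time, then for all lines through the origin whose angle lies in a short interval the intersection number with \(\cC_j(t)\) would jump up by two, contradicting this monotonicity; hence the number of radial tangencies is non-increasing along the flow. It remains to see this number is two just after \(t=-j\). At \(t=-j\) the two Yin--Yang arms are polar graphs, so \(\theta\) is strictly monotone on each (increasing toward, resp.\ decreasing away from, the angle of the radial segment), while the radial segment itself lies in a single line through the origin. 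To handle this degeneracy I would replace \(\cC_j(-j)\) by a \(C^\infty\)-small, fixed-endpoint perturbation that rounds the two corners and bends the radial segment into a shallow arc on which \(\theta\) is strictly unimodal; that curve has exactly two radial tangencies, the monotonicity above gives ``at most two'' for all later times, and letting the perturbation tend to zero transfers the bound to \(\cC_j(t)\) (a smooth Curve Shortening solution is spatially real-analytic, hence not tangent to a line through the origin along an arc, so ``\(\theta\) is unimodal'' survives the \(C^1\) limit). With the lower bound, this gives exactly two.

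For the endpoints, a standard computation of the evolution of \(\max_{\cC_j(t)}\theta\) does it. At the tip the tangent is radial, \(\cC_j(t)\) lies locally on the side \(\theta\leqslant\vt^+_j(t)\) and curves away from the line \(\{\theta=\vt^+_j(t)\}\), so the unit normal in the curvature direction is \(-\widehat\theta\) and \(\kappa\geqslant 0\) there. Since in Curve Shortening the curve moves with velocity \(\kappa\) in the curvature direction, \(\nabla\theta=|X|^{-1}\widehat\theta\), and \(\theta\) has a critical point at the tip, the evolution of the maximum is \(\tfrac{d}{dt}\vt^+_j(t)=-\kappa/|X|\leqslant 0\), with strict inequality except at the isolated times when the tip is an inflection point (Theorem~\ref{thm-inflections}), so \(\vt^+_j\) is strictly decreasing. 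At the spiral center \(\cC_j(t)\) lies locally on the side \(\theta\geqslant\vt^-_j(t)\), the curvature direction is \(+\widehat\theta\), and since the tangent there points outward one has \(\langle X,X_s\rangle=|X|\), whence Theorem~\ref{thm-kappa+xxs-Nonnegative} gives \(\kappa>|X|>0\); the same computation yields \(\tfrac{d}{dt}\vt^-_j(t)=\kappa/|X|>0\).

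The main obstacle is the degenerate initial curve \(\cC_j(-j)\): it is only Lipschitz, with two corners and an entire radial segment, and its spiral center sits essentially at the rotation center of the Yin--Yang soliton whose bulk \(\cC_j(t)\) tracks. One must argue carefully that, for every \(t>-j\), the origin lies off \(\cC_j(t)\) and outside the region it bounds (so the winding number is zero and the lift \(\theta\) exists), and that the instantly-bending radial segment does not create a third radial tangency — the fixed-endpoint smoothing together with the limit in the smoothing parameter is designed to handle the latter, and this is the one step where the non-increase from Theorem~\ref{sturmian} must be invoked before any limit is taken.
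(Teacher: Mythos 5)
Your proposal follows essentially the same route as the paper's proof: the decomposition is obtained from the Sturmian Theorem~\ref{sturmian} applied to lines/rays through the origin (static solutions of CSF), reducing the count of radial tangencies to that of the square-profile initial curve, with the corner-rounding limit handled exactly as the paper itself does in the proof of Theorem~\ref{thm-kappa+xxs-Nonnegative}. The one place you go beyond the paper is the strict monotonicity of \(t\mapsto\vartheta^\pm_j(t)\): the paper's proof is silent on that half of the statement, whereas your computation of \(\tfrac{d}{dt}\vartheta^\pm_j\) at the two radial-tangency points --- in particular using \(\kappa-\langle X,X_s\rangle>0\) together with \(X_s=X/|X|\) at the inner end to get \(\tfrac{d}{dt}\vartheta^-_j=\kappa/|X|>1\) --- is correct (it reproduces the soliton's unit angular velocity in the equality case) and fills that gap. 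The difficulty you flag at the spiral center --- the curve passes through, or arbitrarily close to, the origin, where the lifted polar angle is discontinuous, so that "two graphs over a common interval \([\vartheta^-_j,\vartheta^+_j]\)" degenerates there and the winding-number/critical-point bookkeeping needs care --- is genuine, but it is equally unaddressed in the paper's own one-paragraph proof, so it is not a defect of your approach relative to theirs.
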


\begin{proof} By the maximum principle, the ``really old solutions'' \(\cC_j(t)\) will be contained inside of
the Yin-Yang curve.  The Sturmian property, Theorem \ref{sturmian}, tells us that the
number of intersections of \(\cC_j(t)\) and the rays \(\theta = \theta_0 \in \R\) is
non-increasing, and only decreases when there is a tangency.  This implies that the desired graph decomposition exists.  These two graphs are bounded above and below by the branches of the Yin-Yang soliton on their polar interval of definition, \([\vt^-_j(t), \vt^+_j(t)]\).
\end{proof}

Similarly, we can always write each \(\cC_j(t)\) as a union of two graphs taking
values in \(\theta\), the polar angle.  Recall that the images of \(\cR(\theta
-t)\) for \(t \in [-\pi,\pi)\) foliate the punctured plane \(\R^2 \setminus \{0\}\). See Figure~\ref{fig-foliation}.

\begin{lem}
For all \(t\), \(\cC_j(t)\) can be decomposed into two graphs of two functions which
take leaves of the foliation as inputs and have their range in the set of polar
angles.  More specifically, for \(T\ll 0\) there exist \(y_{j, 1}, y_{j,
2}:(-\infty, T] \to \big(-\tfrac{\pi}{2}, \tfrac{\pi}{2}\big)\) and functions 
\[
\Theta_j^\pm:\{(t, y) : t<T, y_{j, 1}(t) < y < y_{j, 2}(t)\} \to \R
\]
such that the very old solution \(\cC_j(t)\) is the union of the two curves
\[
Y(\Theta_j^\pm(t, y), t, y) = \cR\bigl( \Theta_j^\pm(t, y) - t + y\bigr) \vE1(\theta),
\]
where $Y$ is given by \eqref{eq-YY-leaves-parametrized}.
\end{lem}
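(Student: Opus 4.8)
The plan is to mirror the proof of Lemma~\ref{lem-polargraphdecomp}, replacing the rays through the origin by the leaves of the Yin--Yang foliation. The crucial observation is that for each fixed $y$ the map $(t,\theta)\mapsto Y(\theta,t,y)=\cR(\theta-t+y)\vE1(\theta)$ is the rotating Yin--Yang soliton, hence a solution of Curve Shortening, so that Theorem~\ref{sturmian} applies to $\cC_j$ paired with any leaf (or a compact sub-arc of one).

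First I would introduce the leaf-parameter function. By the maximum principle $\cC_j(t)$ lies in the region bounded by the rotating Yin--Yang soliton at time $t$, which is foliated by the leaves $Y(\cdot,t,y)$, $y\in(-\pi/2,\pi/2)$; thus every point of $\cC_j(t)$ lies on a unique such leaf, and this defines a continuous function $y_j(\cdot,t):\cC_j(t)\to(-\pi/2,\pi/2)$. Since $\cC_j(t)$ is compact and connected its image is a closed interval $[y_{j,1}(t),y_{j,2}(t)]$, and comparing $\cC_j$ with the two extremal leaves $Y(\cdot,\cdot,\pm\pi/2)$ (CSF solutions that $\cC_j(-j)$ touches along its arms) via the strong maximum principle I would obtain $y_{j,1}(t),y_{j,2}(t)\in(-\pi/2,\pi/2)$ for $t>-j$.

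Next I would show that for $t<T$ (a fixed $T\ll0$) and every $y\in(y_{j,1}(t),y_{j,2}(t))$ the leaf $Y(\cdot,t,y)$ meets $\cC_j(t)$ in exactly two points. For the lower bound: $y_j(\cdot,t)$ is continuous on the circle $\cC_j(t)$ and attains the interior value $y$ precisely on $\cC_j(t)\cap Y(\cdot,t,y)$; this level set separates the nonempty sets $\{y_j<y\}$ and $\{y_j>y\}$ on the circle, hence contains at least two points. For the upper bound I would apply Theorem~\ref{sturmian}: fixing a compact sub-arc $L_y$ of $Y(\cdot,\cdot,y)$ from a point near the origin to a point well outside $\cC_j$, whose endpoints avoid $\cC_j(t)$ for $t$ just above $-j$, and counting the intersections of $L_y$ with $\cC_j(-j)$ --- one near the origin, where $\cC_j(-j)$ passes through the foliation's center, and one on the radial segment --- Sturm monotonicity keeps the number of intersections of $\cC_j(t)$ with $Y(\cdot,t,y)$ at most two for all later $t$. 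The count is therefore exactly two; the two intersection points depend continuously on $y$, and smoothly away from tangencies by the implicit function theorem, so labeling them (say by their lifted polar angle) produces $\Theta_j^\pm(t,y)$ on $\{t<T,\ y_{j,1}(t)<y<y_{j,2}(t)\}$.

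Finally, since every point of $\cC_j(t)$ lies on some leaf with parameter in $[y_{j,1}(t),y_{j,2}(t)]$ and is therefore one of the at most two intersection points of $\cC_j(t)$ with that leaf, it follows that $\cC_j(t)=\{Y(\Theta_j^+(t,y),t,y)\}\cup\{Y(\Theta_j^-(t,y),t,y)\}$ with $Y$ as in~\eqref{eq-YY-leaves-parametrized}, which is the assertion. The main obstacle I anticipate is the exact count: both $\cC_j(t)$ and the leaves wind many times around the origin, so ``number of intersections'' is cleanest after lifting to the universal cover of $\R^2\setminus\{0\}$, where $\cC_j(t)$ becomes the pair of $\theta$-graphs $r=R_j^\pm(\theta,t)$ of Lemma~\ref{lem-polargraphdecomp} and the leaf the single graph $r=\cR(\theta-t+y)$, reducing the claim to ``$\cR(\theta-t+y)$ meets $R_j^+(\theta,t)$ once and $R_j^-(\theta,t)$ once''; and because the initial curve $\cC_j(-j)$ is only piecewise smooth, passes through the origin, and has its two arms lying on the extremal leaves, Theorem~\ref{sturmian} has to be applied on $(-j,T]$ and the count carried across the degeneracy at $t=-j$ by a continuity argument.
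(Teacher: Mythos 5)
Your proposal is correct and follows essentially the same route as the paper: count the intersections of the initial square-profile curve $\cC_j(-j)$ with each leaf $\{r=\cR(\theta+y-t)\}$ (one at the origin, one on the radial segment), then invoke the Sturmian theorem to keep that count at most two for all later times, with the tangency leaves marking $y_{j,1}(t)$ and $y_{j,2}(t)$. You supply somewhat more detail than the paper does (the lower bound via separation on the circle, lifting to the universal cover, and the degeneracy of the initial intersection at the origin), but the underlying argument is the same.
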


\begin{proof}
The initial square-profile curve \(\cC_j(-j)\) is tangent to the graphs of
\(\cR(\theta \pm \pi/2 +j)\) and intersects the graphs of \(\cR(\theta + y + j)\),
\(y \in (-\pi/2,\pi/2)\) twice: once at the origin and once on the line segment
connecting the two branches of \(\cC_j(-j).\) Then, by the Sturmian theorem, for
all subsequent \(t>-j\), \(\cC_j(t)\) can be split into two graphs corresponding to
the ``upper'' and ``lower'' intersection points with the leaves of the Yin-Yang
foliation. At each time \(t\), these graphs split at two unique leaves of the
foliation, marked by values \(y_{j,1}(t), y_{j,2}(t) \in (-\pi/2, \pi/2)\), so
that \(\cC_j(t)\) is tangent to the curves \(\{r = \cR(\theta + y_{j,1}(t) - t)\}\)
and \(\{r = \cR(\theta + y_{j,2}(t) - t)\}\).  We know that these two points are
unique since a greater number of tangencies would introduce more than two
intersection points for other curves \(\{r = \cR(\theta + y - t)\}\).  We call the
coordinate system \((y, \theta) \in (-\pi, \pi) \times (0, \infty)\) the
``Yin-Yang polar coordinate system'' and denote the two functions giving the
upper and lower graphs comprising \(\cC_j(t)\) by \(\Theta_j^-(y,t)\) and
\(\Theta_j^+(y,t)\) respectively, defined on the interval \((y_{j,1}(t),y_{j,2}(t))
\subset (-\pi/2, \pi/2)\).
\end{proof}

\begin{lem}\label{linftytip}
  There exist \(T<0\) and \(C>0\) such that \(\vartheta_j^+(t) \leqslant \vartheta_*^+(t)+C\) for all \(j\in\N\) and all \(t \in [-j, -T]\).
\end{lem}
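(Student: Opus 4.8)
The plan is to argue by contradiction, using the uniform area bound established just before the lemma together with the containment of $\cC_j(t)$ inside the Yin--Yang soliton. Take $T=T^*$, large enough that $\cE(T^*)$ is finite and that the asymptotic expansions of Appendix~\ref{sec-YY-properties} are valid, and set $D\isdef \sup_j A_j+\cE(T^*)<\infty$. Let $\Lambda_j(t),\Lambda_*(t)$ denote the regions enclosed by $\cC_j(t)$ and $\cC_*(t)$. By the area comparison \eqref{eq-homotopy-area-bound} we have $\mathrm{Area}\bigl(\Lambda_j(t)\triangle\Lambda_*(t)\bigr)\le D$ for all $j$ and all $t\in[-j,-T]$. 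By the maximum principle $\cC_j(t)$ lies between the two Yin--Yang arms $r=\cR(\theta-t\pm\tfrac\pi2)$, so by Lemma~\ref{lem-polargraphdecomp} the region $\Lambda_j(t)$ is, in polar coordinates, bounded by two graphs $R_j^-(\theta,t)\le R_j^+(\theta,t)$ over $\theta\in[\vartheta_j^-(t),\vartheta_j^+(t)]$, which moreover satisfy $\cR(\theta-t-\tfrac\pi2)\le R_j^-\le R_j^+\le\cR(\theta-t+\tfrac\pi2)$.

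Suppose the lemma fails: for every $C$ there are $j$ and $t\in[-j,-T]$ with $\vartheta_j^+(t)>\vartheta_*^+(t)+C$. Since $\Lambda_*(t)\subseteq\{\theta\le\vartheta_*^+(t)\}$ by definition of $\vartheta_*^+$, the set $S\isdef\Lambda_j(t)\cap\{\theta>\vartheta_*^+(t)\}$ is disjoint from $\Lambda_*(t)$, hence
\[
\mathrm{Area}(S)=\int_{\vartheta_*^+(t)}^{\vartheta_j^+(t)}\tfrac12\bigl((R_j^+)^2-(R_j^-)^2\bigr)\,d\theta\le D.
\]
From $\cR(\alpha)^2=2\alpha+\cO(\alpha^{-1})$ the ``full gap'' $\tfrac12\bigl(\cR(\theta-t+\tfrac\pi2)^2-\cR(\theta-t-\tfrac\pi2)^2\bigr)$ equals $\pi+o(1)$ uniformly over the relevant range of $\theta$. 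Hence if $\cC_j(t)$ stayed close to the Yin--Yang arms past $\vartheta_*^+(t)$ --- say $(R_j^+)^2-(R_j^-)^2\ge\pi$ on a subinterval of $[\vartheta_*^+(t),\vartheta_j^+(t)]$ of length $\ge\tfrac12 C$ --- then $\mathrm{Area}(S)\ge\tfrac\pi4 C$, which for $C$ large contradicts $\mathrm{Area}(S)\le D$. Consequently a counterexample to the lemma forces $R_j^+-R_j^-$ to be small over almost all of the excess angular interval $[\vartheta_*^+(t),\vartheta_j^+(t)]$: the curve $\cC_j(t)$ would have to develop a long thin forward ``finger'' poking past the approximate tip.

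Ruling out such a finger is the heart of the matter, and this is where the one-sided curvature bound of Theorem~\ref{thm-kappa+xxs-Nonnegative} and the monotonicity of the inflection count (Theorem~\ref{thm-inflections}) enter. Writing $\kappa-\langle X,X_s\rangle>0$ for the outer and inner polar graphs --- noting that $\cC_j(t)$, oriented counterclockwise, runs along the outer graph with $\theta$ increasing and along the inner graph with $\theta$ decreasing, which flips the sign of $\kappa$ and of $\langle X,X_s\rangle$ on the inner arm --- and subtracting, one obtains, wherever $\cC_j(t)$ is nearly circular, a differential inequality for the width $w(\theta)=R_j^+(\theta,t)-R_j^-(\theta,t)$ of the form $w_{\theta\theta}+2(\theta-t)(1+o(1))\,w_\theta<0$. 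This forces $w_\theta$ to change sign at most once, so $w$ is unimodal in $\theta$; since the area bound forces $w$ to be comparable to the full gap throughout the bulk (where $\cC_j(t)$ is area-close to $\cC_*(t)$), $w$ is necessarily decreasing on the excess interval, and the sharper version of the inequality bounds below how slowly it can decrease. Quantifying this shows that a finger reaching an angular distance $C$ past $\vartheta_*^+(t)$ must sweep area $\gtrsim C$ into $S$, again contradicting $\mathrm{Area}(S)\le D$. (Alternatively one can use Theorem~\ref{thm-inflections}: the square-profile initial curve $\cC_j(-j)$ has only boundedly many inflection points --- those from the central $S$-shape of the truncated Yin--Yang soliton and from the two corners, a number independent of $j$ --- whereas a thin forward finger coexisting with a bulk that still fills the channel between the arms would create strictly more.)

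Combining the two cases, there is a constant $C$ depending only on $D$ --- hence only on $T^*=T$ --- such that $\vartheta_j^+(t)\le\vartheta_*^+(t)+C$ for all $j\in\N$ and $t\in[-j,-T]$. The main obstacle is precisely the exclusion of the thin finger: the area estimate alone is powerless there, since a sufficiently thin finger encloses arbitrarily little area, so one genuinely needs the one-sided curvature estimate (and/or the inflection-count monotonicity) to control the geometry of the tip region.
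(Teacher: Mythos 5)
Your reduction to the ``thin finger'' configuration is correct and matches the spirit of the paper's first step (the paper runs a Chebyshev-type argument in Yin--Yang coordinates to find an angle \(\theta_0\) past which the curve has small width), but the step you yourself identify as the heart of the matter --- excluding the finger --- is not actually proved, and the mechanism you propose for it does not work. You try to rule out the finger \emph{statically}, at a fixed time, by deriving from \(\sigma=\kappa-\langle X,X_s\rangle>0\) a differential inequality for the width \(w(\theta)=R_j^+-R_j^-\) and claiming it ``bounds below how slowly \(w\) can decrease.'' Two problems: first, the inequality is nonlinear in \(r\), so subtracting the outer and inner versions leaves zeroth-order and quadratic terms that do not cancel, and in any case an inequality of the form \(w_{\theta\theta}\leqslant 2(\theta-t)|w_\theta|\) only yields \(|w_\theta(\theta)|\gtrsim |w_\theta(\theta_1)|e^{-(\theta-t)^2+(\theta_1-t)^2}\), which is vacuous for \(\theta-t\sim 2|t|\). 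Second, and more fundamentally, \(\sigma>0\) cannot exclude long thin fingers at a single instant: every rotated leaf \(r=\cR(\theta-t+y)\) of the Yin--Yang foliation has \(\sigma\equiv 0\) (rotation about the origin preserves both \(\kappa\) and \(\langle X,X_s\rangle\)), so the region between two nearby leaves is an arbitrarily long, arbitrarily thin channel whose boundary sits exactly on the closure of the admissible set; a small perturbation produces such a finger with \(\sigma>0\) strictly. The same objection applies to your parenthetical alternative via Theorem~\ref{thm-inflections}: a thin finger between two foliation leaves need not carry more inflection points than the initial square-profile curve already has, and the theorem only says the count is non-increasing.

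The paper's proof is essentially \emph{dynamic}, and the time variable cannot be removed. It tracks the area \(A(t)\) enclosed by the arc \(\gamma(t)=\cC_j(t)\cap\{\theta\geqslant\vartheta_*^+(t)+\theta_0\}\) and the chord \(P^-P^+\), computing \(\frac{dA}{dt}\leqslant -(\beta^--\beta^+)+2\delta\). The one-sided bound \(\sigma>0\) is used only to show that \(\phi-\tfrac12 r^2\) is monotone along \(\gamma\), whence either the tangent at \(P^+\) satisfies \(\beta^+>\tfrac34\pi\) --- in which case a short geometric argument shows \(\gamma\) cannot reach more than \(O(\delta|t|^{-1})\) past \(\theta_0\), so there is no long finger at all --- or \(\beta^+<\tfrac58\pi\), in which case \(\frac{dA}{dt}<-\pi/4\). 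Since \(A(t)<\epsilon\), any excursion with \(\vartheta_j^+>\vartheta_*^+ +2\theta_0\) lasts less than one unit of time; combining this with the monotonicity of \(\vartheta_j^+\) and \(\vartheta_*^+\) and \(\frac{d}{dt}\vartheta_*^+=-1+o(1)\) converts the bound on the \emph{duration} of the excursion into a bound on the \emph{angular excess}. This last conversion --- time-of-excursion into angle, via the unit speed of the tip --- is the idea your argument is missing, and without it the area bound plus \(\sigma>0\) at a fixed time is, as far as I can see, genuinely insufficient.
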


\begin{proof}
Assume that \(\epsilon < \pi / 16\).  

For any \(t\in[-j, T]\) at which \(\vartheta_j^+(t) > \vt_*^+(t)\) we consider the area \(\cA_j(t)\) of the ``really old solution'' \(\cC_j(t)\) inside the polar interval \([\vt_*^+(t),\vartheta_j^+(t)]\), where \(\vt_*^+(t)\) and \(\vartheta_j^+(t)\) are the endpoints of the intervals of definition of the approximate solution \(\cC_*(t)\) and \(\cC_j(t)\) respectively.  This area measures the ``tail'' of the \(\cC_j(t)\) that may form between the tip of \(\cC_j(t)\) and the tip of \(\cC_*(t)\).  Note that the area \(\cA_j(t)\) is bounded above by the error
\[ \cA_j(t) \leqslant A_j + \cE(T^*) = A_j + \int_{-\infty}^{-T^*} \int_{\cC_*(\tau)} |V - \kappa| ds \; d\tau < \epsilon
\]
To calculate this area, first consider the function \(\Theta_{t,j}(y) := \max\{\Theta_j^+(y,t) - \vartheta_*^+(t), 0\}\) over the interval \((y_{j,1}(t),y_{j,2}(t))\).  Then in the \((\theta, y)\) ``Yin-Yang coordinates,'' we can integrate to find the area:
\[\cA_j(t) = \int_{y_{j,1}(t)}^{y_{j,2}(t)} \int_{\vartheta_*^+}^{\Theta_{t,j}(y) + \vartheta_*^+} \cR(\theta +y - t)\det(D\mathcal{T}) d\theta dy,
\]
where \(\mathcal{T}: (0, \infty) \times (-\pi/2, \pi/2) \rightarrow (0, \infty) \times (0, \infty)\) is the coordinate transformation given by \(\mathcal{T}(\theta, y) = (\theta, \cR(\theta + y - t))\).  Clearly, \(\det \mathcal{T}= \cR' (\theta + y - t)\), so
\[\cA_j(t) = \int_{y_{j,1}(t)}^{y_{j,2}(t)} \int_{\vartheta_*^+}^{\Theta_{t,j}(y) + \vartheta_*^+} \cR(\theta +y - t)\cR'(\theta +y - t) d\theta dy \approx \int_{y_{j,1}(t)}^{y_{j,2}(t)} \Theta_{t,j}(y) dy,
\]
by the asymptotic expansions in \eqref{eq-YY-simple-asymptotics}.

We argue that given a small \(\delta >0\), it is possible to pick an angle \(\theta_0\) independent of \(j\) such that the measure \(|\{y:\Theta_{t,j}(y)>\theta_0\}| < \delta\).
Indeed, it follows from
\[
\theta_0 \big|\{y:\Theta_{t,j}(y)>\theta_0\}\big| \leqslant \int_{y_{j,1}(t)}^{y_{j,2}(t)} \Theta_{t,j}(y) dy < \epsilon
\]
that if \(\theta_0 < \frac{\epsilon}{\delta}\), then  \(|\{y:\Theta_{t,j}(y)>\theta_0\}| < \delta\) holds for all \(t, j\).

\medskip

The two points intersection of \(\cC_j(t)\) with the ray \(\theta=\vartheta_*^+(t)+\theta_0\) are
\[
P^\pm(t) = R_j^\pm(\vartheta_*(t)+\theta_0, t).
\]
Let \(\gamma(t)\) be the arc on \(\cC_j(t)\) on which \(\theta\geq \vartheta_*^+(t)+\theta_0\), and whose endpoints therefore are \(P^\pm(t)\).  Consider the area \(A(t)\) of the region enclosed by \(\gamma(t)\) and the line segment connecting \(P^\pm(t)\).  This area changes because the arc \(\gamma(t)\) moves, and also because the line segment \(P^-P^+\) moves.  The rate of change is therefore the sum of \(-\int_{\gamma(t)}\kappa ds\) and the rate at which the segment \(P^-P^+\) sweeps out area.

\begin{figure}[h]
  \centering
  \includegraphics[width=\textwidth]{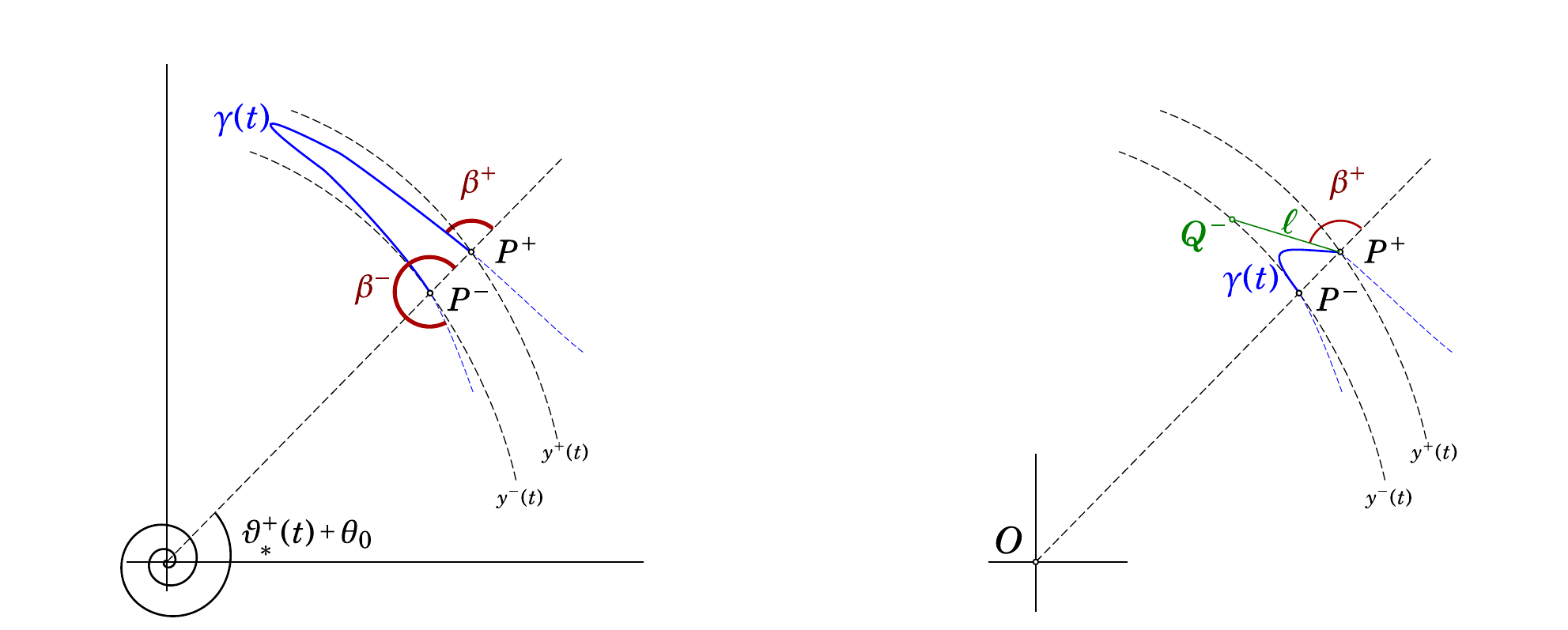}
  \caption{\textbf{Left: }The arc \(\gamma(t)\).  \textbf{Right: } the angles \(\beta^\pm\)}
  \label{fig-phiEstimate}
\end{figure}

If \(\phi:\gamma(t)\to\R\) is the tangent angle along the arc (i.e.~\(X_s = \vE1(\phi)\)), then the curvature integral is
\[
\int_{\gamma(t)}\kappa\,ds = \phi_{P^-(t)} - \phi_{P^+(t)}.
\]
The line segment \(P^+P^-\) moves with angular velocity \(\frac{d}{dt}\vartheta_*^+(t)\) and therefore adds area to the region enclosed by \(\gamma(t)\) at the rate
\[
\frac 12\Bigl\{\bigl(R_j^+\bigr)^2 - \bigl(R_j^-\bigr)^2 \Bigr\}
\frac{d\vartheta_*^+(t)}{dt} 
\]
in which \(R_j^\pm\) are evaluated at \(\theta=\vartheta_*^+(t)+\theta_0\).
Our construction of the cap implies that \(\vartheta_*^+(t) = -t + o(1)\), and that this relation may be differentiated: \(\frac{d}{dt}\vartheta_*^+(t) =  -1 + o(t)\).

The radii \(R_j^\pm(\vartheta_*^+(t)+\theta_0, t)\) are given in terms of their Yin-Yang coordinates \(y^\pm(t)\) via
\[
R_j^\pm(\vartheta_*^+(t)+\theta_0, t) = \cR(\vartheta_*^+(t)+\theta_0 - t + y^\pm).
\]
It follows that at \(\vartheta_*^+(t)+\theta_0\) 
\begin{align*}
\frac 12\Bigl\{\bigl(R_j^+\bigr)^2 - \bigl(R_j^-\bigr)^2 \Bigr\}
  &= \frac12 \bigl\{\cR(\vartheta_*^+(t)+\theta_0 - t + y^+)^2 -  \cR(\vartheta_*^+(t)+\theta_0 - t + y^-)^2\bigr\} \\
  &= \cR\,\cR'\,(y^+-y^-)
\end{align*}
in which \(\cR, \cR'\) are evaluated at \(\vartheta_*(t)+\theta_0+\tilde y\) for some \(\tilde y\in[y^-, y^+]\) that is provided by the mean value theorem.  The asymptotics of \(\cR\) imply that \(\cR\cR' = 1+o(1) <2\).  Our choice of \(\theta_0\) was such that  \(0<y^+-y^-\leqslant\delta\).  Hence 
\[
\left|
\frac 12\Bigl\{\bigl(R_j^+\bigr)^2 - \bigl(R_j^-\bigr)^2 \Bigr\}
\frac{d\vartheta_*^+(t)}{dt}
\right| \leqslant 2\delta.
\]
In total, the rate at which the area \(A(t)\) enclosed by the arc \(\gamma(t)\) grows is bounded by
\[
\frac{dA}{dt} \leqslant  -\bigl(\phi_{P^-(t)} - \phi_{P^+(t)}\bigr) + 2\delta.
\]
\medskip

We estimate the change in tangent angle across the arc \(\gamma(t)\). 
Let \(\beta^+\) be the counterclockwise angle from the ray \(\theta=\vartheta_*^+(t)+\theta_0\) to the tangent \(X_s\) to \(\gamma\) at \(P^+\), and similarly, let \(\beta^-\) be the counterclockwise angle from the same ray to the tangent to \(\gamma\) at \( P^-\) (see Figure~\ref{fig-phiEstimate}).  We have \(0<\beta^+<\pi<\beta^-\) and \(\phi_{P^-} - \phi_{P^+} = \beta^-  - \beta^+\).

Recall that \(\kappa - \langle X, X_s \rangle > 0\) along \(\cC_j(t)\).  Since \(\kappa=\phi_s\) and \(\langle X, X_s \rangle = \frac12 \frac{d}{ds}r^2\), where \(r=\|X\|\), it follows that \(\phi-\frac12 r^2\) increases as one traverses \(\gamma\) from \(P^+\) to \(P^-\).  Thus, at any point with polar coordinates \((\theta, r)\) on \(\gamma\) one has 
\[
\phi > \phi_{P^+} + \frac12 (r^2 - r_{P^+}^2)
\]
The lowest value \(r\) has on \(\gamma\) occurs at the point \(P_-\), and we have just shown that \(r_{P^+}^2 - r_{P_-}^2 \leqslant 2\delta\).  Hence \(\phi>\phi_{P^+}-2\delta\) on the entire arc \(\gamma\).

It follows that if \(\beta^+>\frac 34\pi\), then the angle between the tangent to \(\gamma\) and the ray \(OP^+\) (\(O\) is the origin) will always be at least \(\frac34\pi-2\delta\), i.e.~more than \(\frac58\pi\), provided we choose \(\delta<\frac{\pi}{16}\).  

Consider the line \(\ell\) through \(P^+\) whose angle with \(OP^+\) is \(\frac58 \pi\).  The euclidean distance between \(P^-\) and \(P^+\) is \(r_{P^+}-r_{P^-} \leqslant 2\delta/(r_{P^-}+r_{P^+})\leqslant C\delta |t|^{-1/2}\), since \(r_{P^+} > r_{P^-}\gtrsim |t|^{1/2}\).

At this scale the Yin-Yang leaves will be almost straight lines near \(P^\pm\), so that the line \(\ell\) then intersects the Yin-Yang leaf with \(y=y_-(t)\) at a point \(Q^-(t)\), also at a distance \(d(P^-, Q^-) \lesssim \delta|t|^{-1/2}\).

\[
  \vartheta_j^+(t) - \vartheta_*^+(t)-\theta_0 
  \sim \frac{d(P^-, Q^-)}{r_{P^-}} \lesssim \delta |t|^{-1} \ll \theta_0.
\]
Hence the largest polar angle on \(\gamma\) will be at most
\[
\vartheta_j^+(t) \leqslant \vartheta_*^+(t) + \theta_0 + \delta |t|^{-1}
\leqslant \vartheta_*^+(t) + 2\theta_0 .
\]
Thus we find that if \(|t|\) is sufficiently large, then either \(\vartheta_j(t) < \vartheta_*(t)\), or else \(\beta^+<\frac58\pi\).  In the latter case the area enclosed by \(\gamma(t)\) decreases faster than
\[
\frac{dA}{dt} \leqslant - \beta^-+\beta^+ + 2\delta \leqslant
-\pi + \frac58\pi + 2\delta = - \frac38\pi+2\delta < -\frac\pi4,
\]
again assuming that \(\delta<\pi/16\).

We now finally prove that \(\vartheta_j^+(t)-\vartheta_*^+(t)\) is uniformly bounded for all \(t\in[-j, T]\) and~\(j\).

At \(t=-j\) we have \(\vartheta_j^+(t) < \vartheta_*^+(t)+\theta_0\), by definition of the initial curve \(\cC_j(-j)\).  Hence, if at any time \(t_1<T\) one has \(\vartheta_j^+(t) > \vartheta_*^+(t)+2\theta_0\), then there is a largest interval \((t_2, t_3)\ni t_1\) on which  \(\vartheta_j^+(t) > \vartheta_*^+(t)+2\theta_0\).  In particular, at \(t=t_2\) one has  \(\vartheta_j^+(t) = \vartheta_*^+(t)+2\theta_0\).

Define the arc \(\gamma(t)\) as above.  Its enclosed area is at most \(\varepsilon\), where we may assume that \(\varepsilon<\pi/4\).  During the time interval \((t_2, t_3)\) the area decreases at a rate of at least \(\pi/4\), and therefore the length \(t_3-t_2\) of the time interval cannot exceed \(\varepsilon/(\pi/4) < 1\).  At time \(t=t_2\) we had \(\vartheta_j^+(t) = \vartheta_*^+(t)+2\theta_0\).  Since \(\vartheta_j^+(t), \vartheta_*^+(t)\) are nonincreasing functions, we have throughout \((t_2, t_3)\)
\begin{align*}
\vartheta_j^+(t) - (\vartheta_*^+(t)+2\theta_0)
&\leqslant \vartheta_j^+(t_2) - (\vartheta_*^+(t)+2\theta_0)\\
&= \vartheta_*^+(t_2) + 2\theta_0 - (\vartheta_*^+(t)+2\theta_0)\\
&\leqslant \vartheta_*^+(t_2) + 2\theta_0 - (\vartheta_*^+(t_3)+2\theta_0)\\
&\leqslant  \vartheta_*^+(t_2) -\vartheta_*^+(t_3).
\end{align*}
Since \(\frac{d}{dt}\vartheta_*^+(t) = 1+ o(1)\) we find that
\[
\vartheta_j^+(t) - (\vartheta_*^+(t)+2\theta_0) \leqslant 1+o(1) <2
\]
for all \(t\in(t_2, t_3)\).
\end{proof}


To summarize, we can now decompose every very-old solution \(\cC_j(t)\), for
\(t \in [-T^*- 1/2, -T^*]\) into four graphs in two different coordinate systems,
\(R_j^\pm (\theta, t)\) in polar coordinates, and \(\Theta^\pm_j(y,t)\) in Yin-Yang polar
coordinates.

\begin{lem}[Curvature bounds]
There exist \(T<0\) such that for any \(T'<T\) the lengths \(L_j(t)\) and curvatures of \(\cC_j(t)\) are uniformly bounded for all \(j\) and all \(t\in[T', T-2]\).
\end{lem}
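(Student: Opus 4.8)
The plan is to run the three-step programme announced at the start of the section: first decompose each \(\cC_j(t)\) into a bounded number of graphs, then bound the heights of these graphs using the area estimates of the previous lemmas, and finally feed this \(C^0\) information into interior estimates for the graphical curve shortening equation to obtain uniform curvature bounds; the length bound will be read off separately. (Alternatively, the curvature bound follows from Theorem~\ref{unifcurvbounds} by covering the compact interval \([T',T-2]\) with finitely many of the intervals \([-T^*-\tfrac14,-T^*]\), \(T^*\in[2-T,-T']\).)

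\textbf{Height and curvature bounds.} First, by the maximum principle \(\cC_j(t)\) stays inside the spiral region bounded by the two Yin--Yang arms, so its smallest polar angle \(\vartheta_j^-(t)\) is bounded below in terms of \(t\); together with Lemma~\ref{linftytip}, which gives \(\vartheta_j^+(t)\leqslant\vartheta_*^+(t)+C\), the polar interval \([\vartheta_j^-(t),\vartheta_j^+(t)]\) of \(\cC_j(t)\) lies in a fixed interval, depending only on \(T'\), for all \(j\) and all \(t\in[T',T-2]\). Hence \(0\leqslant R_j^\pm(\theta,t)\leqslant R_0\), the graphs \(\Theta_j^\pm(y,t)\) take values in a fixed interval, and every \(\cC_j(t)\) lies in the fixed ball \(B_{R_0}\), with \(R_0=R_0(T')\). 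This is the step that uses the finiteness of the error \(\cE\): it is what forces \(\cC_j(t)\) to stay \(O(1)\)-close to the truncated Yin--Yang curve and so underlies Lemma~\ref{linftytip}. Next, since by \eqref{eq-YY-simple-asymptotics} the radial rays and the foliation leaves are uniformly transverse to one another, near any point of \(\cC_j(t)\) at least one of the four graph representations \(R_j^\pm,\Theta_j^\pm\) is non-vertical on a space-time cylinder whose size is bounded below independently of \(j\) (this is the point that requires work---see below); on such a cylinder the height bound together with the interior gradient estimate for graphical curve shortening bounds the slope, the resulting equation for the slope is then uniformly parabolic and in divergence form with bounded coefficients, and De Giorgi--Nash--Moser and Schauder estimates bootstrap to a uniform \(C^{2,\alpha}\) bound, hence to a uniform bound on \(|\kappa_j|\) on \([T',T-2]\).

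\textbf{Length bound.} By Theorem~\ref{thm-inflections} the number of inflection points of \(\cC_j(t)\) is a non-increasing function of \(t\), and a count of the sign changes of curvature on the suitably rounded square-profile initial curve \(\cC_j(-j)\)---two convex Yin--Yang arcs, one straight segment and two small rounded corners---bounds it by a constant \(N_0\) independent of \(j\). These at most \(N_0\) inflection points split \(\cC_j(t)\) into convex sub-arcs, and a convex arc contained in \(B_{R_0}\) lies on the boundary of its convex hull---itself a convex subset of \(B_{R_0}\)---and so has length at most \(2\pi R_0\). Therefore \(L_j(t)\leqslant 2\pi R_0(N_0+1)\) for all \(j\) and all \(t\in[T',T-2]\).

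\textbf{Main obstacle.} The hard part will be guaranteeing that every arc of \(\cC_j(t)\) is a graph, with height controlled as above, over a space-time cylinder whose size does not degenerate as \(j\to\infty\). This forces one to use both coordinate systems of the decomposition lemmas, to treat separately the neighbourhood of the origin---where \(\cC_j(t)\) is squeezed into a thin region between the two arms and is better viewed as a normal graph over the Yin--Yang arm---and the neighbourhood of the tip \(R(t)\vE1(-t)\)---where \(\cC_j(t)\) is close to the rescaled Grim Reaper cap---and to exploit the one-sided curvature inequality \(\kappa-\langle X,X_s\rangle>0\) of Theorem~\ref{thm-kappa+xxs-Nonnegative} to keep the tangent of \(\cC_j(t)\) from aligning with the bad coordinate direction along such a cylinder. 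If one uses Theorem~\ref{unifcurvbounds} instead, the same difficulty is inherited by its proof.
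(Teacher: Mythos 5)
Your proposal follows the three\hyphenchar\font=`\- -step programme announced at the top of Section~\ref{sec-convergence} (graph decomposition, $L^\infty$ bounds, divergence-form parabolic estimates), but the actual proof in the paper abandons that route precisely at the point you flag as the ``main obstacle,'' and your write-up does not close that gap. To run the interior gradient estimate and De Giorgi--Nash--Moser you need each piece of $\cC_j(t)$ to be a bounded graph over a space-time cylinder whose size is bounded below \emph{independently of $j$}; the area/$C^0$ information from Lemma~\ref{linftytip} and the graph decompositions gives height bounds at each fixed time but says nothing about how the graphical domains persist backwards in time or how large they are near the turning points (the tip and the two tangency leaves $y_{j,1}(t), y_{j,2}(t)$, where $R_j^+$ and $R_j^-$ meet and the gradient of the polar graph blows up by definition). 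You correctly identify this as the hard part and then leave it unproved, so as written the argument is incomplete. The fallback via Theorem~\ref{unifcurvbounds} is circular: in the paper that theorem is the statement whose proof is this lemma.

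The paper's actual argument is different and sidesteps graphicality entirely. The length bound comes first (from the fact that $\cC_j(t)$ splits into four arcs on each of which both Yin--Yang coordinates $y$ and $\theta$ are monotone and range over a uniformly bounded rectangle); then the identity $\frac{d}{dt}L_j(t)=-\int_{\cC_j(t)}\kappa^2\,ds$ integrated over $[T'-2,T'-1]$ produces a time $t_j$ with $\int_{\cC_j(t_j)}\kappa^2\,ds<L$, hence uniform $C^{1,1/2}$ bounds by Cauchy--Schwarz; a $C^1$-convergent subsequence plus Grayson's theorem and continuous dependence on initial data then gives smooth convergence on $[\delta,T-T']$ and the desired uniform curvature bound by contradiction. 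Your length bound via Theorem~\ref{thm-inflections} (uniformly many inflection points, each convex sub-arc in $B_{R_0}$ having length at most $2\pi R_0$) is a legitimate alternative to the paper's monotone-coordinate argument, and it is the one piece of your proposal that is complete; but the curvature bound, which is the heart of the lemma, rests on an unestablished uniform graphicality claim, whereas the paper's $L^2$-curvature/compactness argument needs no such input.
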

\begin{proof}
The length bounds follow from the fact that in \((y, \theta)\) coordinates each \(\cC_j(t)\) is contained in a uniformly bounded rectangle \(|y|\leqslant \pi/2\), \(-t\leqslant \theta\leqslant \vartheta_*^+(t)+2\theta_0+2\), and the fact that \(\cC_j(t)\) decomposes into four segments on each of which both \(y\) and \(\theta\) are monotone.

Consider a given value \(T'<T\).  Assume that our Lemma fails, and that along some subsequence \(j\) the maximal curvature of \(\cC_j(t)\) with \(t\in[T', T-2]\) becomes unbounded.

For \(t\in[T'-2, T]\) the lengths \(L_j(t)\) of \(\cC_j(t)\) are uniformly bounded by some \(L>0\).  It follows that
\[
\int_{T'-2}^{T'-1}\int_{\cC_j(t)} \kappa^2\, ds\, dt = \bigl[-L_j(t)\bigr]_{T'-2}^{T'-1} < L.
\]
Therefore, there is a sequence \(t_j\in [T'-2, T'-1]\) such that
\[
\int_{\cC_j(t_j)} \kappa^2\, ds < L.
\]
By a Sobolev embedding theorem this implies that the curves \(\cC_j(t_j)\) are uniformly \(C^{1, 1/2}\), i.e.~they are continuously differentiable, and their tangent angles \(\phi_j\) are uniformly Hölder continuous~---~in fact, for any two points at arclength coordinates \(s_1, s_2\) in \(\cC_j(t_j)\) one has
\[
|\phi(s_2)-\phi(s_1)| = \left|\int_{s_1}^{s_2} \kappa\, ds\right| \leqslant \sqrt{s_2-s_1} \sqrt{\int_{s_1}^{s_2}\kappa^2ds} \leqslant \sqrt L \sqrt{s_2-s_1}.
\]
It follows that all \(\cC_j(t_j)\) are uniformly locally Lipschitz curves.  Now consider the solutions to curve shortening with \(\cC_j(t_j)\) as initial data, i.e.~consider \(\tilde\cC_j(t) = \cC_j(t_j+t)\).  These solutions all exist for \(0\leqslant t\leqslant T-t_j\geqslant T-T'\).  Supposing that along some subsequence of \(t_j\) the curvatures of the \(\tilde\cC_j\) are not bounded for \(1\leqslant t\leqslant T-T'\), we pass to a further subsequence for which the initial curves \(\tilde\cC_j(0)\) converge in \(C^1\) to some limit curve \(\tilde \cC_*\).  The enclosed areas of the \(\tilde\cC_j(0)\) then also converge, and hence, by Grayson's theorem~\cite{grayson:hes86}  the evolution by Curve Shortening \(\cC_*(t)\) starting from \(\cC_*\) exists for \(0\leqslant t\leqslant T-T'\).  By continuous dependence on initial data it follows that the solutions \(\tilde\cC_j(t)\) converge in \(C^\infty\) to \(\cC_*(t)\) on any time interval \([\delta, T-T']\) with \(\delta>0\).  This implies that the curvatures of the \(\tilde \cC_j(t)\) are uniformly bounded for \(t\in [1, T-T']\), which then implies that the curvatures of \(\cC_j(t)\) are uniformly bounded after all for \(t\in[t_j+1, t_j+T-T'] \subset [T', T-2]\).
\end{proof}

\appendix
\section{The Yin-Yang Soliton}\label{sec-YY-properties}
Hungerbühler and Smoczyk \cite{2000HungerBuehlerSmoczyk} proved uniqueness and existence of a rotating soliton for curve shortening that contains the origin.  See also Halldorson~\cite{2010Halldorsson} and Altschuler~et.al.~\cite{MR3043378}.  Here we derive its more detailed asymptotic behavior, which we use in our construction of the approximate solution.

\subsection{The Yin--Yang soliton in polar coordinates}

For an evolving family of curves written in polar coordinates \( X(t, \theta) = r(t, \theta) \vE1(\theta)\) the Curve Shortening Deficit is
\[
(V-\kappa)\,ds = \left\langle X_t - \tfrac{X_{\theta\theta}}{\|X_\theta\|^2}, JX_\theta\right\rangle d\theta =\Biggl\{-rr_t +\frac{r(r_{\theta\theta}-r)-2r_\theta^2} {r^2+r_\theta^2}\Biggr\}d\theta.
\]
It follows that \(X\) is a solution of CSF if and only if \(r(\theta,t)\) satisfies
\begin{equation}
\label{eq-CSF-polar-coords}
r\frac{\partial r}{\partial t} =
\frac{rr_{\theta\theta} - r_\theta^2}{r^2+r_\theta^2} - 1
= \frac{\partial}{\partial\theta}\Bigl(\arctan \frac{r_\theta}{r}\Bigr) - 1.
\end{equation}

If we look for solutions of the form \(r(\theta,t) = \cR(\theta - t)\) we get an ODE for \(\cR(\theta)\)
\begin{equation}
\label{eq-YY-ode}
-\cR\cR' = \frac{\cR\cR''-\cR'^2}{\cR^2+\cR'^2}-1.
\end{equation}
Hungerbühler and Smoczyk~\cite{2000HungerBuehlerSmoczyk} observed that this equation can be integrated once.  By suitably rotating the curve around the origin we can ensure that the resulting integration constant vanishes, and we therefore have
\begin{equation}\label{HunSmo}
\frac{1}{2}\cR(\theta)^2 - \theta
+ \arctan\frac{\cR'(\theta)}{\cR(\theta)}
=0.
\end{equation}
We consider the soliton that passes through the origin.  When this happens \(\cR\to0\) and \(\cR'\to\infty\), so that~\eqref{HunSmo} implies \(\theta\to\pi/2\).  The function \(\cR(\theta)\) is therefore defined for all \(\theta>\pi/2\), and, as proved by Hungerbühler and Smoczyk, \(\cR'(\theta)>0\) for all \(\theta>\pi/2\).

\subsection{Asymptotic expansion of \(\cR\)}

We now show that \(\cR(\theta)\) has an asymptotic expansion of the form
\begin{equation}
\label{eq-R-expansion-general}
\cR(\theta) = (2\theta)^{1/2}\left\{ 1 + \frac{c_1}{2\theta} + \frac{c_2}{(2\theta)^{2}}
+\cdots+ \frac{c_{N}}{(2\theta)^{N}}+ \cO(\theta^{-N-1})\right\}
\qquad (\theta\to\infty)
\end{equation}
for any \(N\in\N\).  These expansions can be differentiated any number of times.  The coefficients \(c_j\) can be computed by substituting the expansions in \eqref{eq-YY-ode} and recursively solving for \(c_j\).  In particular, one finds \(c_1=0\), \(c_2=-1\), and \(c_3=\frac{11}{3}\) so that
\begin{align}
  \label{eq-R-expansion-two-terms}
  \cR(\theta)
  &= \sqrt{2\theta} - \frac{1}{(2\theta)^{3/2}}
    + \frac{11}{3} \frac{1}{(2\theta)^{5/2}} + \cO(\theta^{-7/2})
  &&(\theta\to\infty) \\
  \label{eq-Rderiv-expansion-two-terms}
  \cR'(\theta)
  &= \frac{1}{\sqrt{2\theta}} + \frac{3}{(2\theta)^{5/2}}
    - \frac{55}{3} \frac{1}{(2\theta)^{7/2}}  + \cO(\theta^{-9/2})
  &&(\theta\to\infty).
\end{align}
This implies that the quantities \(R=R(t)=\cR(-2t)\), \(\varepsilon=1/R\), and \(R_\theta=R_\theta(t)=\cR'(-2t)\), which we use in the construction of the cap, have expansions in powers of \(\tau=-4t\), given by
\begin{equation}
\label{eq-R-etc-expanded}
\left\{\quad
\begin{aligned}
R = \cR(-2t)
&= \sqrt\tau \bigl\{1-\tau^{-2} + \tfrac{11}{3}\tau^{-3}+ \cO(\tau^{-4})\bigr\}\\
\varepsilon = \frac1R &=
\frac{1}{\sqrt\tau} \bigl\{1 + \tau^{-2} -\tfrac{11}{3}\tau^{-3} + \cO(\tau^{-4})\bigr\}.  \\
R_\theta = \cR'(-2t) &= \frac{1}{\sqrt\tau} \bigl\{1+3\tau^{-2} - \tfrac{55}{3}\tau^{-3} + \cO(\tau^{-4})\bigr\}.
\end{aligned}
\right.
\end{equation}

\medskip

\textit{Proof of \eqref{eq-R-expansion-general}} Consider the quantity \( u(\theta) = \theta - \tfrac{1}{2} \cR(\theta)^2\).  Since \(\cR'(\theta)>0\) for all \(\theta\) it follows from \eqref{HunSmo} that
\begin{equation}
\label{eq-u-basic-bound}
0< u(\theta)<\frac\pi2\text{ for all }\theta>\frac\pi2.
\end{equation}
Directly differentiating \(u=\theta-\frac12\cR^2\) and using \eqref{HunSmo} we find an equation for \(u\),
\begin{equation}
\label{eq-u-ode}
u'(\theta) = 1 - 2(\theta - u(\theta)) \tan u(\theta).
\end{equation}
i.e.
\[
u'+ 2\theta u = F(\theta,u) \isdef 1+2\theta (u-\tan u) + 2u\tan u
\]
We use induction to show that \(u(\theta)\) has an expansion of the form
\begin{equation}\label{eq-u-expansion}
u(\theta) =
\frac{u_1}\theta + \frac{u_2}{\theta^2}+\cdots+ \frac{u_N}{\theta^N}+\cO(\theta^{-N-1})
\end{equation}
for any \(N\in \N\).

Begin with the case \(N=0\).  We know that \(0<u<\pi/2<\theta\), so that \(u<\tan u\), and hence
\[
u'(\theta) = 1 - 2(\theta - u) \tan u \leqslant 1 -2 (\theta - u) u = 1 - 2 \theta u + 2u^2
\]
which implies
\[
u' + 2 \theta u \leqslant 1+2u^2 \leqslant 1+\frac{\pi^2}{2}=C_0.
\]
Multiply with \(e^{\theta^2}\), and integrate
\begin{align*}
  0<u(\theta)
  \leqslant
  u(\theta_0)e^{\theta_0^2-\theta^2}
  + C_0 \int_{\theta_0}^{\theta} e^{\xi^2-\theta^2} \, d\xi
  =O(\theta^{-1}) \quad(\theta\to\infty),
\end{align*}
Thus the case \(N=0\) holds.

For the induction step we expand \(\tan u\) in a Taylor series,
\[
\tan u = u + \frac{u^3}3 + \cdots = u + \sum_{k\geqslant 1}c_k u^{2k+1}
\]
and rewrite the equation for \(u\) as
\[
F(\theta, u) = 1 + 2u^2 + \sum_{k\geqslant1}2(\theta-u)c_ku^{2k+1}
\]
Multiplying with \(e^{\theta^2}\) and integrating from some fixed \(\theta_0>\pi/2\) we get
\begin{equation}\label{eq-u-integral-eqn}
u(\theta) = e^{\theta_0^2-\theta^2}u(\theta_0)
+ \int_{\theta_0}^\theta e^{\xi^2-\theta^2}F(\xi, u(\xi))\, d\xi
\end{equation}
Repeated integration by parts leads to
\begin{multline}\label{eq-int-xik-by-parts}
\int_{\theta_0}^\theta e^{\xi^2-\theta^2} \xi^{-k}d\xi =\\
\frac12 \theta^{-k-1}+\frac{k+1}{2}\theta^{-k-3}+\cdots +\frac{(k+1)\cdots(k+2m-1)}{2^m}\theta^{-k-2m-1} + \cO(\theta^{-k-2m-3})
\end{multline}
for all \(k,m\in\N\).  If we assume that \(u\) has an expansion up to \(\cO(\theta^{-(N-1)})\), then we also have expansions for \(u^2\) and \(F(\theta, u)\) up to \(\cO(\theta^{-(N-1)})\).  Substitute these expansions in the integral equation~\eqref{eq-u-integral-eqn} and use~\eqref{eq-int-xik-by-parts} to conclude that \(u\) has an expansion up to \(\cO(\theta^{-N})\), as claimed.

The expansion~\eqref{eq-u-expansion} for \(u\), which we now have proved, implies \(\cR(\theta)=\sqrt{2(\theta-u(\theta))}\) also has an asymptotic expansion in powers of \(\theta^{-1}\).

Finally, while one cannot in general differentiate asymptotic expansions, one can integrate them.  Thus if a function \(f(\theta)\) and its derivative \(f'(\theta)\) both have asymptotic expansions in powers of \(\theta^{-1}\), then by integrating the expansion of \(f'\) one should get the expansion for \(f\), up to a constant: this implies that the expansion of \(f'\) can be found by differentiating the expansion for \(f\).  We therefore only have to show that all derivatives of \(u(\theta)\) have expansions in powers of \(\theta^{-1}\), which will then imply that the expansions~\eqref{eq-u-expansion} can be differentiated.

To find expansions for \(u^{(m)}\), note that if \(u\) has an expansion with remainder \(\cO(\theta^{-N-1})\), then simple substitution in the differential equation~\eqref{eq-u-ode} leads to an expansion for \(u'(\theta)\) with remainder \(\cO(\theta^{-N})\).  Going further, one can differentiate~\eqref{eq-u-ode} \(m-1\) times and express \(u^{(m)}(\theta)\) in terms of \(u, u', u'', \dots, u^{(m-1)}\).  This implies that if one has an expansion in powers of \(\theta^{-1}\) of the first \(m-1\) derivatives of \(u\), then one also has an expansion for \(u^{(m)}\).  By induction it follows that all derivatives of \(u\) have such expansions.

Similar arguments also apply to the expansions of \(\cR(\theta)\).

\subsection{Inversion of the expansion of \(\cR\)}\label{sec:cR-inverted}
The expansion \eqref{eq-R-expansion-general}, which expresses \(\cR\) as a function of \(\theta\), implies that one can invert the function \(\theta\mapsto\cR(\theta)\), and that the inverse has an asymptotic expansion.  It follows from \eqref{eq-R-expansion-general} that
\[
\cR(\theta)^2 = (2\theta)\left\{ 1 + \frac{\bar c_1}{2\theta} + \frac{\bar c_2}{(2\theta)^{2}} +\cdots+ \frac{\bar c_{N}}{(2\theta)^{N}}+ \cO(\theta^{-N-1})\right\}
\]
and hence
\[
2\theta = \cR^2 \left\{ 1 + \frac{\bar c_1}{2\theta} + \frac{\bar c_2}{(2\theta)^{2}} +\cdots+ \frac{\bar c_{N}}{(2\theta)^{N}}+ \cO(\theta^{-N-1})\right\}^{-1}.
\]
Repeated substitution of this expansion in itself allows one to convert all powers of \((2\theta)\) on the left into powers of \(\cR^2\), so that we have an expansion
\[
2\theta = \cR^2 \left\{ 1 + \frac{\tilde c_1}{\cR^2} + \frac{\tilde c_2}{\cR^4} +\cdots+ \frac{\tilde c_{N}}{\cR^{2N}}+ \cO(\cR^{-2N-2})\right\}
\]
for certain coefficients \(\tilde c_i\).

\bibliographystyle{plain} 
\bibliography{GApapers.bib}

\end{document}